\documentclass[12pt]{amsart}
\usepackage{graphicx}
\usepackage{commath}
\usepackage{stmaryrd}
\usepackage{xcolor}
\usepackage{bm}
\usepackage{hyperref}
\usepackage{mathabx}
\usepackage{tikz-cd}
\usepackage{enumitem}
\usepackage{caption}
\usepackage{amssymb}
\setlist[enumerate]{label=\arabic*., ref=\arabic*}

\newtheorem{lemma}{Lemma}

\newtheorem{theorem}{Theorem}
\theoremstyle{definition}
\newtheorem{definition}{Definition}
\newtheorem{remark}{Remark}

\newtheorem{proposition}{Proposition}

\newcommand{\HH}{\mathbb{H}}

\newcommand{\G}{\mathcal{G}}
\newcommand{\T}{\mathbb{T}}
\newcommand{\E}{\mathcal{E}}
\newcommand{\p}{\partial}

\newcommand{\diam}{\mathrm{diam}}

\renewcommand{\Tilde}{\widetilde}

\title{On the minimal diameter of finite covers}

\author{Yifei Cai}
\address{Qiuzhen College, Tsinghua University, Beijing 100084, China}
\email{caiyf23@mails.tsinghua.edu.cn}

\author{Qiliang Luo}
\address{Qiuzhen College, Tsinghua University, Beijing 100084, China}
\email{lql23@mails.tsinghua.edu.cn}

\begin{document}
    
	\maketitle
	
	\begin{abstract} 
		We prove that every closed hyperbolic surface admits a sequence of finite covers with asymptotically optimal diameters.
	\end{abstract}
	
	\section{Introduction}
	
	Let $S$ denote a closed hyperbolic surface, and $\diam(S)$ the diameter of $S$.  An elementary area argument gives $\diam(S)\geq \log g(S)$, where $g$ denotes the genus.  
	\begin{definition}
	    A sequence of closed hyperbolic surfaces $S_k$ is said to be with asymptotically optimal diameter if
	    \[
	\diam(S_k)=\left(1+o(1)\right)\log g(S_k),\quad\text{as $k\to \infty$.}
	\]
	\end{definition}
	Budzinski-Curien-Petri \cite{2} showed that the minimal diameter of a closed hyperbolic surface of genus $g$ is asymptotically $\log g$ by constructing a sequence of surfaces with asymptotically optimal diameters. They do so by selecting surfaces from moduli spaces for all genus. We show a significant strengthening: one need only consider the set of covers of \emph{any} fixed hyperbolic surface.
	
	\begin{theorem}\label{main}
	Let $S$ be an arbitrary closed hyperbolic surface. There exists a sequence of finite covers $S_k$ of $S$ with asymptotically optimal diameter.
	\end{theorem}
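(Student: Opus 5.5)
The plan is to take random finite covers of $S$ and show that, with positive probability, their diameter is at most $(1+\epsilon)\log g$. For a degree-$n$ cover, $g(S_n)-1 = n(g(S)-1)$, so $\log g(S_n)=\log n+O(1)$, and the target bound is $\diam(S_n)\le(1+\epsilon)\log n$ for every fixed $\epsilon>0$ and $n$ large. I would use the model of uniformly random degree-$n$ covers, i.e.\ random homomorphisms $\pi_1(S)\to\mathrm{Sym}(n)$ chosen uniformly among all $\phi$. Equivalently, one can pass to a convenient finite cover where this is easier: for example, a cover that retracts onto a free group, or one glued from pants so that the cover is determined by random permutations attached to gluing data. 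The point is that covers of a cover of $S$ are covers of $S$, so nothing is lost by this reduction.

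The strategy mirrors the graph case, where a random $d$-regular graph has diameter $(1+o(1))\log_{d-1} n$. Fix a fundamental domain $F$ for $S$ and a point $x_0$. In the cover $S_n$, the lifts of $F$ are indexed by $[n]\times$ tiles. Given two lifts $p,q$ of $x_0$, look at the ball of radius $R=(\tfrac12+\epsilon)\log n$ around each in the universal cover $\HH$. Its area is about $\pi e^{R}\approx n^{1/2+\epsilon}$, so it meets roughly $n^{1/2+\epsilon}$ translates of $F$, and these correspond to group elements $\gamma$ with $d(x_0,\gamma x_0)\le R$. The images of these in $S_n$ are random sheets. The key steps are the following.

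\begin{enumerate}
\item Explore the ball around $p$ and show that, with high probability, it is ``almost injective'': the number of distinct sheets reached is $\gtrsim n^{1/2+\epsilon/2}$. Collisions occur only along a small set of group elements, and the expected number of short cycles is $O(\text{polylog})$.
\item Do the same for $q$, conditioned on the first exploration.
\item Show the two explored sets of sheets intersect except with probability $\exp(-n^{\epsilon})$. Then take a union bound over the $n^2$ pairs $p,q$.
\end{enumerate}

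These steps give $\diam(S_n)\le 2R+O(\diam F)=(1+2\epsilon)\log n+O(1)$.

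The main obstacle is steps 1 and 3: the randomness of $\phi$ in $\mathrm{Sym}(n)^{2g}$ subject to the surface relation is not independent along group elements. A uniform random homomorphism is less clean than independent permutations. To handle this, I would first replace $S$ by a cover whose fundamental group surjects onto a free group $F_2$ with a convenient geometry. A cleaner route is to use pants decompositions: build $S_n$ by gluing $n$ copies of each pair of pants of $S$ along random bijections of boundary components. Random bijections for each curve (with twist fixed) yield exactly the covers of $S$ that restrict to trivial covers on each pants, and these permutations are independent. Exploration is then a branching process on a tree of pants, analogous to random regular graphs, with branching growth rate governed by the area growth $e^{R}$ as computed via lattice point counting in $\HH$.

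The remaining delicate point is converting the combinatorial exploration, where the distance is measured in pants, to hyperbolic distance with only $(1+o(1))$ loss. I would address this by counting, for each target sheet, the geodesic segments of length $\le R$ from $x_0$ in $\HH$ instead of combinatorial paths. I would then run the second-moment/exposure argument on these lattice points, using that the number of $\gamma$ with $d(x_0,\gamma x_0)\le R$ is $\sim c e^{R}$. Controlling the dependence between overlapping paths (they share initial segments in the tree) is where I expect most of the work. The first thing to try is a breadth-first exposure with a sprinkling argument: reserve a fresh layer of thickness $\epsilon\log n$ for the meeting step.
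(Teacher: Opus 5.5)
There is a genuine gap, at exactly the step you call the ``remaining delicate point''.

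\textbf{Why your growth rate is wrong.} In the pants-gluing model over a \emph{fixed} pair $(S,\Gamma)$, the cover is trivial over every pair of pants. So it is covered, pants-preservingly, by the pants tree $T$ of $\Gamma$ (the regular cover of $S$ with deck group $\pi_1(\G)$), not merely by $\HH$. Every element of $\pi_1(T)$ is a ``collision''; this group contains all the pants subgroups, which are free groups of exponential growth. Hence your lattice-point count $\#\{\gamma : d(x_0,\gamma x_0)\le R\}\sim ce^{R}$ in $\HH$ counts group elements, not sheets. The number of distinct pants reached is at most $\#\T(R)$. Its exponential rate $\delta_T$ depends on the geometry of $\Gamma$; the paper notes that with zero twists one can only expect $d_{tree}\lesssim e^{a/4}d_{hyp}$. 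By growth tightness of surface groups (Sambusetti), $\delta_T$ is strictly less than $1$ for every fixed $\Gamma$.

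\textbf{Why this is fatal.} A ball of radius $D$ in the cover is the image of the ball of radius $D$ in $T$. So any cover in this model with diameter $D$ satisfies $\#\T(D)\ge n$, hence
\[
D\ge(\bar\delta_T^{-1}-o(1))\log n,\qquad \bar\delta_T=\limsup_R R^{-1}\log\#\T(R)<1 .
\]
With a fixed base, no refinement of your steps 1--3 can reach $(1+2\epsilon)\log n$. Your other models do not rescue this. Uniform homomorphisms $\pi_1(S)\to\mathrm{Sym}(n)$ are locally modeled on $\HH$, but you give up the independence your exploration needs and offer nothing in its place. Composing random $F_2$-representations with a surjection onto $F_2$ has the same defect as the pants model.

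\textbf{The missing idea.} The base cover must change with the target accuracy, so that its pants tree has growth exponent tending to $1$. The paper gets this from Kahn--Markovi\'c: for every large $a$, some finite cover $S^a$ of $S$ has an $a$-good pants decomposition, with cuff lengths in $[a,a+1]$ and twists in $[1/2,2]$. Every $a$-good pants tree then satisfies $\#\T(R)\ge c_ae^{\delta_aR}$ with $\delta_a=1-5a^{-1}\log a$.

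Proving this bound is the real geometric content, and it goes as follows.
\begin{itemize}
\item The twist lower bound gives $d_{tree}\le ca\,d_{hyp}$.
\item From this, a proportion $\gtrsim a^{-2}$ of the pants of $\T(R)$ are ``useful'' boundary pants.
\item Each useful pants has, beyond it and inside the ball of radius $R+a/2$, an embedded region of area $\asymp e^{a/2}$, disjoint from the others.
\item Together these give $\#\T(R+a/2)\ge Ca^{-2}e^{a/2}\#\T(R)$, and iterating yields the exponent $\delta_a$.
\end{itemize}

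With that input, your two-sided exploration scheme is essentially the paper's Section 4: grow each exploration to roughly $\sqrt n$ pants, control the bad steps, and force the two explorations to meet. This yields $\diam\le(\delta_a^{-1}+\epsilon)\log n$ for random covers of $S^a$, where the paper first makes the pants graph simple to get connectivity. One then takes $a=a_k\to\infty$ and, for each $k$, $n$ large, to obtain the sequence $S_k$.
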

	
	As an immediate consequence, we obtain:
	
	\begin{theorem}\label{main 0}
	    Given \emph{any} closed hyperbolic surface $S$, let $\mathrm{Comm}_S$ denote the set of surfaces commensurable to $S$. Then:
	    $$\inf_{F\in \mathrm{Comm}_S}\frac{\diam(F)}{\log g(F)}=1,$$
	\end{theorem}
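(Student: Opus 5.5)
Theorem~\ref{main 0} follows from Theorem~\ref{main} once we have the lower bound. The infimum is understood along sequences with $g(F)\to\infty$, or equivalently as a $\liminf$; otherwise a single fixed surface could make the ratio small, and that is clearly not the intended reading. So the proof splits into two inequalities.

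For the lower bound, we use the elementary area argument from the introduction. Every $F\in\mathrm{Comm}_S$ is a closed hyperbolic surface. Fix a point $x\in F$. The closed ball of radius $\diam(F)$ about $x$ covers $F$, so
\[
4\pi(g(F)-1)=\mathrm{area}(F)\le 2\pi\bigl(\cosh\diam(F)-1\bigr).
\]
This gives $\diam(F)\ge \log g(F)-O(1)$. Hence
\[
\frac{\diam(F)}{\log g(F)}\ge 1-o(1),
\]
and the infimum, taken in the asymptotic sense, is at least $1$.

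For the upper bound, we invoke Theorem~\ref{main}, which gives finite covers $S_k$ of $S$ with $\diam(S_k)=(1+o(1))\log g(S_k)$. Each finite cover of $S$ is commensurable to $S$, so $S_k\in\mathrm{Comm}_S$. Asymptotic optimality forces $g(S_k)\to\infty$, so
\[
\frac{\diam(S_k)}{\log g(S_k)}\to 1 .
\]
Thus the infimum is at most $1$, and combined with the lower bound it equals $1$.

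All of the difficulty sits in Theorem~\ref{main}, which we take as given here. The only delicate point in this deduction is the interpretation of the infimum: the area bound only gives $\diam \ge \log g - O(1)$, not $\diam\ge\log g$. So if we want a literal infimum over all of $\mathrm{Comm}_S$, we should check whether some low-genus member could push the ratio below $1$. If so, we state the result as a $\liminf$ as $g(F)\to\infty$, which is how we read it.
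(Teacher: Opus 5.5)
Your two ingredients are the paper's: it treats Theorem~\ref{main 0} as an immediate consequence of Theorem~\ref{main} (covers $S_k\in\mathrm{Comm}_S$ with $\diam(S_k)/\log g(S_k)\to 1$) together with the area bound. The gap is in how you use the area bound. You record it only as $\diam(F)\ge\log g(F)-O(1)$. From this you conclude that the literal infimum over $\mathrm{Comm}_S$ might be below $1$ (``a single fixed surface could make the ratio small''). You then replace the statement by a $\liminf$ as $g(F)\to\infty$ and never check the literal claim. So the proposal proves a weaker statement than the one asked, and the reason given for weakening it is false.

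The missing step is one line, and it is already in your display. From $4\pi(g(F)-1)\le 2\pi(\cosh\diam(F)-1)$ you get $\cosh\diam(F)\ge 2g(F)-1$. Since $e^{x}\ge\cosh x$ for $x\ge 0$,
\[
\diam(F)\ \ge\ \log\bigl(2g(F)-1\bigr)\ >\ \log g(F)
\]
for every closed hyperbolic surface $F$, all of which have $g(F)\ge 2$. This is the inequality $\diam\ge\log g$ that the paper quotes in the introduction and uses at the end of the proof of Theorem~\ref{main}. It gives $\diam(F)/\log g(F)>1$ for \emph{every} $F\in\mathrm{Comm}_S$, so the literal infimum is at least $1$. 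Theorem~\ref{main} shows it is at most $1$, so it equals $1$ and is not attained. With this reading you do not need $g(S_k)\to\infty$, only that the ratios tend to $1$.

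Under your own $\liminf$ reading, the claim that asymptotic optimality ``forces'' $g(S_k)\to\infty$ does not follow from a bound of the form $\log g-O(1)$. It does follow from the sharp bound, since the ratio is at least $1+\log(3/2)/\log g(F)$. It also follows directly from the construction, where $g(S_k)\ge m(k)\ge k$.
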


	\subsection{Acknowledgments}
	The authors are extremely grateful to Yi Huang for many helpful suggestions on the earlier draft of this paper. The authors would also like to thank Yunhui Wu and Binbin Xu for telling us about \cite{2}. The second author would like to thank Vladimir Markovi\'c for his support and useful suggestions.

	\section{Constructing asymptotically minimal finite covers}
	\label{proof of the main}
	In this section, we prove Theorem \ref{main} by assuming the Random Cover Theorem, which is proved in Section \ref{probabilistic argument}. There are two key ideas behind the proof of Theorem \ref{main}: 
	\begin{enumerate}
	\item we use Kahn--Markovi\'c's theory \cite{4} to ensure that some finite cover of $S$ has a good pants decomposition structure (see Definition~\ref{good-pants-decom});
	\item we take the pants decomposition graph for such a good pants decomposition, and adapt ideas from the random cover theory of graphs \cite{1} to probabilistically\footnote{I.e.: non-explicitly.} construct finite covers of $S$ with minimal diameter growth rates.
		\end{enumerate}
	\subsection{Good pants decomposition}
	Let $S$ denote a hyperbolic surface. Let $\Gamma$ denote a pants decomposition of $S$, which is a maximal collection of disjoint simple closed curves. Then for every cuff $\gamma\in\Gamma$ we can define the reduced Fenchel-Nielsen coordinates $(\ell(\gamma),s(\gamma))$ from \cite{KM-surface group}, where $\ell(\gamma)$ is the length of the geodesic curve $\gamma$, and $s(\gamma)$ is the reduced twist parameter which lives in the interval $[0,\ell(\gamma)/2]$ (first reduce to $(-\ell(\gamma)/2,\ell(\gamma)/2]$ then take absolute value).
	
		\begin{definition}[Good pants decomposition]\label{good-pants-decom}
		Let $a>10$ be a given constant. Let $S$ be a hyperbolic surface with a pants decomposition $\Gamma$. The pants decomposition $\Gamma$ is said to be $a$-good if for every $\gamma\in\Gamma$ 
		\begin{itemize}
			\item the length of $\gamma$ satisfies $a\leq \ell(\gamma)\leq a+1$;
			\item the (reduced) twist parameter of $\gamma$ satisfies $2^{-1}\leq s(\gamma)\leq 2$. 
		\end{itemize}
	\end{definition}
	The following result, due to Kahn--Markovi\'c, ensures the existence of an $a$-good pants decomposition up to finite covers.
	\begin{proposition}[{\cite[Theorem 2.2]{4}}]\label{KM-prop}
	Let $S$ be any closed hyperbolic surface, then for every sufficiently large $a$, there exists a finite cover $S^a$ of $S$ which admits an $a$-good pants decomposition $\Gamma^a$. 	\end{proposition}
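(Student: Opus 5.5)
The plan is to reproduce the Kahn--Markovi\'c construction from \cite{4} in the surface setting. The key simplification is that all pants will be \emph{immersed geodesic pants in $S$ itself}, glued exactly as they sit in $S$. This makes the resulting glued surface automatically a finite cover. Fix a small $\epsilon>0$ and take $R=a+\tfrac12$, with $a$ large. Call a closed geodesic $\gamma$ in $S$ $(\epsilon,R)$-good if $|\ell(\gamma)-R|<\epsilon$. Call an immersed pair of pants $\pi\colon P\to S$ $(\epsilon,R)$-good if it is a local isometry from a hyperbolic pair of pants whose three cuffs map to $(\epsilon,R)$-good geodesics.

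For such a $\pi$ and a cuff $\gamma$, the \emph{foot} is the unit normal vector to $\gamma$ at the endpoint of the common perpendicular to another cuff. Its two copies differ by the half-length shift, so the foot defines a point of the half-length normal bundle $N^1(\sqrt{\gamma})\cong \mathbb{R}/(\ell(\gamma)/2)\mathbb{Z}$. Since $S$ is a surface, each good pants adjacent to $\gamma$ lies on one definite side (left or right) of $\gamma$.

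\textbf{Step 1: Existence and equidistribution.} Using exponential mixing of the geodesic flow on $T^1S$, I will construct good pants by joining points along nearly closed geodesic arcs (``tripods'' built from orthogeodesic connections). I then count them, showing that good pants exist in abundance. The main estimate to prove is the following. For each good geodesic $\gamma$ and each side of $\gamma$, the measure on $N^1(\sqrt\gamma)$ given by the feet of good pants on that side is, up to a normalizing constant $C(\epsilon,R)$ that is independent of $\gamma$ and of the side, $\delta$-close to Lebesgue measure, with $\delta=e^{-qR}$. Here $\delta$-close means equal after moving mass by at most $\delta$. I expect this step to be the main obstacle: it is the heart of \cite{4}. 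The subtle point is that the counts on the two sides of $\gamma$, and across different $\gamma$, must agree to this high precision. I will first try to derive it from exponential mixing applied to pairs of flow boxes near $\gamma$.

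\textbf{Step 2: Matching.} Let $A_\gamma$ be the multiset of feet of pants on the left of $\gamma$ and $B_\gamma$ the multiset on the right. After Step 1, both are $\delta$-equidistributed with the same total mass. I apply the Hall marriage theorem to produce a bijection $\sigma\colon A_\gamma\to B_\gamma$ such that the foot of $\sigma(p)$ is within $\delta$ of the foot of $p$ shifted by $1$. If the counts are not exactly equal, I first replace each pants by a suitable integer multiple, obtained by rationalizing the weights; this is the standard correction step in \cite{4}. Hall's condition follows from the equidistribution estimate, because any arc of $N^1(\sqrt\gamma)$ of length at least $2\delta$ receives comparable mass from both sides.

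\textbf{Step 3: Gluing and conclusion.} I glue each pants to its partner along $\gamma$ exactly as the two pants meet in $S$, namely with the actual geodesic $\gamma$ identified and the actual offset. The result is a closed hyperbolic surface $S^a$, closed because each pants has finitely many partners and every cuff is matched. The map $S^a\to S$ is a local isometry: near each glued cuff, the two pants lie on opposite sides of $\gamma$ in $S$, and in the pants interiors $\pi$ is already a local isometry. Since $S^a$ is compact, this local isometry is a finite covering.

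The images of the cuffs form a pants decomposition $\Gamma^a$ of $S^a$. Each cuff has length in $(R-\epsilon,R+\epsilon)\subset[a,a+1]$. Each cuff has twist within $2\delta$ of $1$ modulo $\ell/2$, hence reduced twist in $[\tfrac12,2]$ once $a$ is large. This is the required $a$-good pants decomposition.
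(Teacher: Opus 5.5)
\textbf{There is a genuine gap in Steps 1--2.} The paper gives no proof of this statement; it quotes \cite[Theorem 2.2]{4}. Your outline transplants the strategy of \cite{KM-surface group} to the surface $S$, and your Step 3 is fine: the feet are well defined modulo $\ell(\gamma)/2$ because the seams bisect each cuff, and a local isometry from a compact hyperbolic surface is a covering. The gap sits exactly at the point that makes \cite{4} a separate and much harder paper. The equidistribution of feet, which you call the heart of \cite{4}, is really the content of \cite{KM-surface group}.

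\textbf{Why the matching fails.} A perfect matching along $\gamma$ needs the weighted number of good pants on the left of $\gamma$ to equal \emph{exactly} the number on the right, for every good geodesic simultaneously. Mixing only gives these two masses up to a factor $1+O(e^{-qR})$.
\begin{itemize}
\item Step 1 asks both sides to be $\delta$-close, in the mass-moving sense, to the \emph{same} multiple $C(\epsilon,R)\cdot\mathrm{Leb}$. That silently asserts exact equality of total masses. No mixing argument yields this, and for integer counts it is false.
\item The fallback in Step 2 does not repair it. Rationalizing turns an exactly balanced real weighting into an integral one, but it cannot create balance.
\item Each pants enters the balance equations of its three cuffs. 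So the balance conditions form a global linear system, and whether it has a positive, well-distributed solution is the whole issue.
\end{itemize}

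\textbf{Why the $3$-manifold trick does not transfer.} In \cite{KM-surface group}, balance is automatic because the weights are invariant under orientation reversal $\Pi\leftrightarrow\overline{\Pi}$, which swaps pants containing $\gamma$ with pants containing $\gamma^{-1}$. That is why rationalization suffices there. On a surface, $\Pi$ and $\overline{\Pi}$ are the same immersed pants, lying on the same side of $\gamma$. Gluing them folds rather than covers, so the symmetry is useless.

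\textbf{What is missing.} You need Kahn--Markovi\'c's good pants homology theorem: good curves modulo boundaries of good pants have homology agreeing with $H_1(S;\mathbb{Q})$, with quantitative norm control. You also need the correction theorem built on it. That theorem cancels the $e^{-qR}$-small left/right imbalance with a correction measure small enough to keep the weights positive and the feet equidistributed. Without it, no matching exists and your construction does not close up into a cover.
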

	
	\begin{definition}[Pants decomposition graph]
	    Let $\G$ be a trivalent graph defined as follows: 
	    
	    \begin{itemize}
	        \item The vertices of $\G$ correspond to the pairs of pants in the pants decomposition.
	        \item At each vertex, the three incident half-edges are marked by the boundary components of the corresponding pair of pants. Two half-edges are joined by an edge if and only if the corresponding boundary components are identified to form the same cuff of \(\Gamma\).
	    \end{itemize}
	   We call $\mathcal G$ a \emph{pants decomposition graph} of $\Gamma$ (It is called a “marked cubic graph” in \cite[Definition 3.5.2]{3}).
	\end{definition}

		\subsection{Random cover models for graphs and surfaces}
		
	    A random $m$-cover of a graph $\G$ is a probability distribution on the space of all degree $m$ covers of $\G$. We make use of a model for random covers of graphs taken from \cite[Definition~1]{1}, where such a cover is referred to as a \emph{random labeled $m$-cover}. 
	    \begin{definition}[Uniform edge model for graphs] For each edge $e$ in $\G$ we take an arbitrary orientation, so that $e=(u,v)$ for some vertices $u$ and $v$, and choose a random permutation $\sigma_e\in S_m$ uniformly and independently. Put $m$ vertices $u_1,\cdots,u_m$ above each vertex $u$ of $\G$, and put $m$ edges $e_i=(u_i,v_{\sigma_{e}(i)})$ above each oriented edge $e=(u,v)$. This provides a random (unoriented) graph $\G_m$ along with the canonical cover $\G_m\to\G$ sending $u_i$ to $u$ and sending $e_i$ to $e$.
	\end{definition}

    Associated to a graph cover, there is a natural surface cover. Let $S$ be a hyperbolic surface with a pants decomposition $\Gamma$, and let $\G$ be its pants decomposition graph. Let $q:\G'\to \G$ be a finite cover of graphs. The \emph{surface cover induced by $q$} is obtained by pulling back the pants decomposition data of $(S,\Gamma)$ along $q$: over each vertex of $\G'$, take a copy of the pair of pants corresponding to the image vertex in $\G$; for each edge of $\G'$, glue the corresponding boundary components by the same gluing map as the image edge in $\G$.

	\begin{definition}[Uniform edge model for surfaces]
	Let $\G$ be the pants decomposition graph of $\Gamma$. The random degree-$m$ cover $\mathcal G_m\to\mathcal G$ (in the uniform edge model) will induce a random cover for the surface $S$, denoted by $\mathcal S_m\to S$. 
	\end{definition}
	  
	As we will exclusively use the uniform edge model for this article, we henceforth refer to random covers of graphs/surfaces generated via these models simply as random covers. 
	
	\begin{theorem}[Random Cover Theorem]
	\label{main 2}
	Let $a$ be large enough and
		\[
		\delta_a:=1-5a^{-1}\log a.
		\]
		 Suppose $S$ is a hyperbolic surface equipped with an $a$-good pants decomposition $\Gamma$, whose pants decomposition graph is simple.  Then, for any $\epsilon>0$, with probability tending to $1$ as $m\to\infty$, a random cover $\mathcal S_m\to S$ of degree $m$ satisfies
		$$\diam(\mathcal S_m)\leq \left(\delta_a^{-1}+\epsilon\right)\log m.$$
	\end{theorem}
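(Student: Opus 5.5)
Following the strategy of \cite{1} for random covers of graphs, the plan is to translate distances in $\mathcal S_m$ into path lengths in the random graph $\G_m$, weighted by the hyperbolic geometry of the pants. We then run a two-sided neighbourhood-growth argument: balls around two given points grow at rate close to $e^{\delta_a t}$, and two such balls of volume about $m^{1/2+\epsilon'}$ meet with high probability.

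\textbf{Geometric reduction.} In each pair of pants of $S$ with cuff lengths in $[a,a+1]$, fix a base point on each cuff, say the foot of a seam. By hyperbolic trigonometry, the distance between base points on two different cuffs of the same pants is about $a/2 + O(1)$ for the non-twisted seams. Crossing a cuff, the twist condition $2^{-1}\le s(\gamma)\le 2$ shifts the base point by a bounded amount. Any point of a pants lies within $O(\log a)$ of the union of its cuffs' base-point network. This gives the following picture. A path of $k$ edges in $\G_m$ with no backtracking yields a path in $\mathcal S_m$ of length at most $k(a/2+C)$. The surface diameter is then at most $(a/2+C)\cdot \diam_{\text{walks}}+O(a)$. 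It is better to work with the non-backtracking walk on oriented edges and cuff-halves: each step from a cuff into an adjacent pants chooses one of two other cuffs, so the branching factor is $2$ per step of length $\approx a/2$. That alone gives growth rate $2^{2t/a}$, which is far too slow. So instead I would use geodesic arcs crossing many pants. The correct count, and the source of $\delta_a=1-5a^{-1}\log a$, is that orthogeodesic segments within a pants between cuffs, with arbitrary twisting, give roughly $e^{L(1-O(a^{-1}\log a))}$ distinct homotopy classes of paths of length $\le L$ in the universal cover that are concatenations of "good" segments. This matches the hyperbolic area growth $e^{L}$ up to the $\delta_a$ loss. Concretely, define a tree $\mathcal T$ of admissible pieces: each piece is a geodesic segment across one pants, entering at one cuff and exiting at another after a chosen number of windings. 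Length $\ell$ costs about $a/2+$ twist distance, and the number of pieces of length $\le \ell$ is about $e^{\ell}$ up to factors $a^{O(1)}$. The $a^{O(1)}$ per pants crossing of length $\ge a/2$ is exactly what produces $e^{-5\log a\cdot L/a}$.

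\textbf{Probabilistic step.} Fix two points $x,y\in\mathcal S_m$, reduced to base points in lifted pants $u_i,v_j$. Explore $\mathcal S_m$ from $x$ by revealing the permutations $\sigma_e$ only on edges used, exactly as in the exploration process of \cite{1}. Each admissible path in $\mathcal T$ corresponds to a walk in $\G$ whose lift is determined by revealed permutation values. As long as fewer than $m^{1/2+\epsilon'}$ vertices are revealed, each newly revealed value collides with an already-seen lift with probability $O(m^{-1/2+\epsilon'})$. Hence with probability $1-o(m^{-2})$, the number of distinct pants reached by length $\le L$ is at least $e^{\delta_a L}/\mathrm{poly}$ until this exceeds $m^{1/2+\epsilon'}$. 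Simplicity of $\G$ is used here so that lifts of distinct $\G$-edges are independent and no multi-edge degeneracy spoils the tree-like start. Do the same from $y$, with $L=\tfrac12(\delta_a^{-1}+\epsilon)\log m$ on each side. Revealing the remaining unexposed permutation entries of the boundary edges of the two balls then connects them with probability $1-\exp(-m^{\epsilon''})$, since each of the $\gtrsim m^{1/2+\epsilon'}$ boundary edges of one ball hits the other ball with probability $\gtrsim m^{-1/2+\epsilon'}$. A union bound over the $O(m^2)$ pairs of pants lifts, plus the $O(a)$ additive error, gives $\diam(\mathcal S_m)\le(\delta_a^{-1}+\epsilon)\log m$ with probability tending to $1$.

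\textbf{Main obstacle.} The hardest step will be the geometric counting. I must show that the concatenations of pants-crossing pieces give, at length $L$, at least $e^{\delta_a L}$ distinct endpoints in the universal cover/tree, and that their projections are genuine paths of length $\le L$ in $\mathcal S_m$. Controlling lengths of concatenated pieces requires the twist bounds so that joining at cuffs costs only $O(1)$. I would first try to parametrize exits by winding number $n$ around the exit cuff, with length $\approx a/2+\log(1+n)\cdot O(1)$ or $\approx n a$. Then I would count the $(a$-many$)$ choices giving length $\le a+O(\log a)$ per step, and verify by Perron--Frobenius on the transfer count that the exponent is at least $1-5a^{-1}\log a$.
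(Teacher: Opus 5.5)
\textbf{Gap in the geometric counting.} The decisive step is a lower bound of order $e^{\delta_a L}$ on the number of \emph{distinct pairs of pants} within hyperbolic distance $L$ of a base pants, i.e.\ on $\#\T(L)$ for an $a$-good pants tree. Your sketch does not establish this, and the mechanism you propose counts the wrong objects.

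In this model every cuff of $\mathcal S_m$ maps with degree one onto a cuff of $S$, and each component of $\mathcal S_m$ is covered by the pants tree. So arcs that differ by winding around a cuff all enter the same next pair of pants. More generally, all pants-crossing arcs with the same entry and exit cuffs project to the same non-backtracking walk in $\G$ and reveal the same vertex of $\G_m$. Counting homotopy classes or endpoints in the universal cover $\HH$, whose $e^{L}$ growth is just the area growth of $\HH$, therefore says nothing about how many pants are explored.

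Your per-piece counts are also off. Arcs inside one pair of pants with cuffs of length about $a$ grow only at an exponential rate of order $a^{-1}$, since the pants is a thin ribbon around a theta graph with edges of length about $a/2$; they do not grow like $e^{\ell}$. Likewise, ``$a$-many choices per step of length about $a$'' would give exponent about $a^{-1}\log a$, not $1-O(a^{-1}\log a)$.

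Since $\T$ branches only twice per pants, near-maximal growth must come from many tree vertices having short hyperbolic realizations. The paper proves this by an area argument inside the pants tree, using three ingredients:
\begin{enumerate}
\item the Metric Comparison Lemma $d_{tree}\le ca\,d_{hyp}$, which needs the lower twist bound $s(\gamma)\ge 1/2$ that your plan never uses;
\item the counts $2\#S(R)\ge\#\T(R)$ and $\#S'(R)\ge Ca^{-2}\#\T(R)$ for boundary and useful boundary pants;
\item disjoint embedded pieces of radius-$a/2$ balls attached beyond useful boundary pants, which exist because the injectivity radius is at least $a/2$.
\end{enumerate}
Together these give $\#\T(R+a/2)\ge Ca^{-2}e^{a/2}\#\T(R)$, hence $\#\T(R)\ge c_a e^{\delta_a R}$. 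Nothing in your sketch replaces this.

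\textbf{Gaps in the probabilistic step.} With $m^{1/2+\epsilon'}$ pants revealed, the expected number of collisions is of order $m^{2\epsilon'}\to\infty$. A per-step collision probability of $O(m^{-1/2+\epsilon'})$ therefore does not by itself give your growth bound with probability $1-o(m^{-2})$. You must control where collisions occur and show that growth survives in the untouched subtrees. The paper does this as follows:
\begin{enumerate}
\item a two-phase efficiency condition: at most $\lfloor 3/\epsilon\rfloor$ bad steps among the first $\lfloor m^{1/2-\epsilon}\rfloor$, and at most $\lfloor\log^3 m\rfloor$ afterwards up to $m^{1/2}\log m$ pants;
\item the nearest cuff algorithm, to convert exploration size into radius;
\item a doubling trick, to apply the tree bound to subtrees free of bad steps.
\end{enumerate}

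Finally, a boundary cuff can only be glued to a cuff of the same edge type. Connecting the two explorations therefore requires many boundary cuffs of a common type $e$ on one side and many type-$e$ crossings on the other. The paper gets this from the Equidistribution Lemma and a pigeonhole argument; your sketch does not address it.
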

	
	In order to prove Theorem \ref{main 2}, we will first establish a uniform lower bound on volume growth for the corresponding pants tree in Section \ref{sec-vol}. Based on the volume growth bound and a probabilistic argument (known as the peeling process, see \cite{2,Peeling-BCP,Peeling-Budd}), we prove Theorem \ref{main 2}
	in Section \ref{probabilistic argument}. 
	\subsection{Proof of Theorem \ref{main}}
	Let $S$ be a closed hyperbolic surface. To prove the theorem, we construct a sequence of finite covers $S_k$ and estimate their diameters. 	
	
	\par For any integer $k$, let $a$ be a sufficiently large constant such that $0<\tfrac{1}{\delta_a}\leq 1+\tfrac{1}{2k}$, and $\epsilon=\tfrac{1}{2k}$. Let $S^a\to S$ be a finite cover from Proposition~\ref{KM-prop}. We shall use the elementary fact that every finite connected graph admits a finite connected cover which is simple (without loops and multi-edges). Applying this to the pants decomposition graph $\G^a$ of $\Gamma^a$, and pulling back the pants and gluing maps, we replace $(S^a,\Gamma^a)$ by a finite cover, which is still $a$-good by definition. According to Theorem \ref{main 2}, there exists a sufficiently large $m=m(k)\geq k$, and a degree $m$ finite cover $S^a_{m}\to S^a$ such that
		\[
		\diam(S^a_{m})\leq\left(\delta_a^{-1}+\epsilon\right)\log m\leq  \left(1+k^{-1}\right)\log m.
		\]
		Let $S_k=S^a_{m}$. By the Riemann-Hurwitz formula, $$ g(S^a_{m})=m\left(g(S^a)-1\right)+1\geq m.$$ This implies \[\diam(S_{k})\leq \left(1+k^{-1}\right)\log g(S_{k}).\] 
		We complete the proof of Theorem \ref{main} by combining this inequality with the area-based lower bound $ \diam(S_k)\geq \log g(S_k)$.
		\section{Volume growth of good pants trees}
	\label{sec-vol}

	 A \emph{pants tree} $T$ is an infinite-type hyperbolic surface equipped with a pants decomposition $\Gamma$, such that its pants decomposition graph $\T$ is isomorphic to the infinite trivalent tree. We say that $T$ is \emph{$a$-good}, if the pants decomposition $\Gamma$ is $a$-good.

     \begin{definition} Let $p_0\in T$ be an arbitrary chosen point. For $R>0$ let $\T(R)$ denote the subtree of $\T$, consisting of all vertices in $\T$ whose corresponding pair of pants has hyperbolic distance at most $R$ from $p_0$  in $T$. The \emph{volume growth order} of the pants tree $T$ is defined as
	\[
	\delta_T=\liminf_{R\to\infty}\frac{\log \big(\#\T(R)\big)}{R}.
	\]
	\end{definition}
	
	\begin{remark}
	 For convenience, we will conflate a tree with its vertex set, and a vertex in $\T$ with the corresponding pair of pants in $T$.
	 \end{remark}

	Standard rate comparison arguments show that $\delta_T$ is independent of the choice of the base point $p_0$. The goal of this section is to further show that $\delta_T$ has a lower bound which is independent of $T$. We will prove the following supermultiplicativity of the function $\#\T(R)$.
	\begin{proposition}
	\label{prop: supermulti}
	There exists a universal constant $C>0$ such that, for every $a$-good pants tree $T$, the following estimate
	\[
	\#\T\left(R+a/2\right)\geq Ca^{-2}e^{a/2}\#\T(R),
	\]
	holds for every $R>0$.
	\end{proposition}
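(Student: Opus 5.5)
Given a pair of pants $P\in\T(R)$, I will produce many pants in $\T(R+a/2)$ that are assigned to $P$. Each pants of $\T(R+a/2)$ should be assigned to at most a bounded number of such $P$. The ratio of the two counts then gives the factor $Ca^{-2}e^{a/2}$.

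\textbf{Geometry of a single good pants.} An $a$-good pants $P$ has cuffs of length in $[a,a+1]$. Hyperbolic trigonometry shows that each of the three seams (common perpendiculars between cuffs) has length about $2e^{-a/4}$, which is exponentially small. So $P$ is, up to $O(1)$, a thickened trivalent graph: every point of $P$ lies within distance $O(1)$ of the union of its three cuffs. The twist condition $2^{-1}\le s(\gamma)\le 2$ is bounded, so when the next pants is glued across a cuff $\gamma$, its seams meet $\gamma$ at points within $O(1)$ of the seams of $P$. As a result, moving around a cuff at distance $t$ from a reference point is a walk along a geodesic of length about $a$. Crossing into a neighbouring pants costs only $O(1)$.

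\textbf{Key step.} Take $P\in\T(R)$ and a point $x\in P$ with $d(x,p_0)\le R$. Fix a cuff $\gamma$ of $P$ that is not on the tree-path from $P$ toward $p_0$. This is possible unless $P$ is the root pants, in which case any cuff will do. The subtree hanging beyond $\gamma$ is an $a$-good pants tree. I claim that the set of pants in this subtree within distance $a/2$ of $x$, modulo $O(\log a)$ corrections, has cardinality at least $c\,a^{-2}e^{a/2}$. The heuristic is a branching count. Going to depth $k$ in the tree from $\gamma$ costs roughly $\sum \ell_i$, where $\ell_i$ is the arc-length travelled along successive cuffs, each $\ell_i\in[0,(a+1)/2]$, plus $O(1)$ per step. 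Each choice of positions of length $\ell_i$ gives about $\ell_i$ distinct pants of the next generation, since along a cuff of length $a$ new pants are attached at essentially distinct points. More precisely, each cuff is shared by exactly one other pants. The branching comes from the fact that, after crossing a cuff, the two other cuffs of the new pants are reachable with a choice of which one to follow and how far.

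I therefore plan to count geodesic-like paths in the \emph{universal-cover picture}. Lift to $\HH$: the preimages of the cuffs form a family of geodesics, with complementary regions being lifts of pants, and the $a$-good structure makes this close to a fixed model tiling up to bounded distortion. I will count the number of distinct lifted pants met by the hyperbolic ball of radius $a/2$ around $x$ that lie in the half-plane cut off by a lift of $\gamma$. Area is the cleanest tool. That half-ball has area $\gtrsim e^{a/2}$, since a constant fraction of the ball lies on that side once we move $O(1)$ into it. Each pants has area $2\pi$, but only a portion of the ball meets each one. To control the overlap I will use the thin-part structure: a pants meets the ball of radius $a/2$ in area at most $O(a^2)$, because it is an $O(1)$-neighbourhood of three geodesic segments, each contributing length at most $O(a)$ inside the ball. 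The factor $a^{2}$ is a safe overestimate that absorbs this. Hence the ball meets at least $c\,a^{-2}e^{a/2}$ pants in the subtree beyond $\gamma$, and all of them lie in $\T(R+a/2)$.

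\textbf{Injectivity and the main obstacle.} Pants assigned to different $P,P'\in\T(R)$ could coincide. To fix this, I will assign to each $P$ only pants whose tree-geodesic to the root passes through $P$ and leaves via $\gamma$, i.e.\ descendants of $P$. This gives injectivity if $\T(R)$ is a subtree closed under taking ancestors, which I will verify using the thin-pants geometry. However, the descendant sets of different $P$ are nested, not disjoint. So I will restrict to leaves of $\T(R)$, meaning pants in $\T(R)$ with a child outside it, whose descendant sets are disjoint. A trivalent tree with $N$ vertices has at least about $N/2$ such boundary vertices, which yields the claim with a universal $C$.

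The main obstacle is the area-overlap estimate: showing that a single pants occupies at most $O(a^2)$ of the ball uniformly over all $a$-good geometries. This is where the twist bounds are used, to rule out long spiralling contact. A secondary obstacle is making the ancestor-closed property of $\T(R)$ rigorous.
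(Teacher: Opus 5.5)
\textbf{There is a genuine gap in your key step.} Your bound ``the part of $B(x,a/2)$ beyond $\gamma$ has area at least of order $e^{a/2}$'' needs $d(x,\gamma)=O(1)$. But you only know $x\in P$ and $d(x,p_0)\le R$. Every point of $P$ is $O(1)$-close to the \emph{union} of the cuffs, not to a prescribed cuff.

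For disjointness you must take $\gamma$ to be a cuff leading out of $\T(R)$, and then the bound can fail badly. Let $P$ be a boundary pants of degree $2$ in $\T(R)$, with child cuffs $\gamma'$ (child in $\T(R)$) and $\gamma$ (child outside $\T(R)$). Suppose $B(p_0,R)$ enters $P$ only through the thin strip near the seam joining $\gamma_P$ to $\gamma'$; this can happen. That strip is at distance about $a/4$ from $\gamma$. Hence $d(p_0,\gamma)$ is at least about $R+a/4$, and the part of $B(p_0,R+a/2)$ beyond $\gamma$ is a cap of depth about $a/4$ and area of order $e^{a/4}$. For such $P$ your area argument gives only of order $e^{a/4}$ pants. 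Switching to $\gamma'$ does not help: the pants beyond $\gamma'$ are also descendants of other pants of $\T(R)$, so your injectivity is lost. Therefore the closing step (``at least about $N/2$ boundary vertices, which yields the claim'') does not follow.

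\textbf{The missing ingredient is the Kahn--Markovi\'c Metric Comparison Lemma $d_{tree}\le ca\,d_{hyp}$.} This, not an area-overlap bound, is where the factor $a^{-2}$ and the lower twist bound $s(\gamma)\ge 1/2$ enter. The paper proceeds as follows.
\begin{enumerate}
\item It keeps only \emph{useful} boundary pants. Either the incoming cuff lies entirely in $B(p_0,R)$, so some point of it in the ball is $O(1)$-close to the outgoing cuff. Or $P$ is a leaf pants, where $\gamma_P$ meets the ball and each of its points is $O(1)$-close to one of the two outgoing cuffs.
\item Disjoint embedded half- or quarter-balls of radius $a/2$ then give $\#\T(R+a/2)\ge ce^{a/2}\#S'(R)$.
\item For a non-useful boundary pants, $B(p_0,R)$ meets $\tilde\gamma_P$ in an arc of length at most $a+1$. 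So its descendants in $\T(R)$ lie within hyperbolic distance $a$ of $\gamma_P$, hence within tree distance $O(a^2)$.
\item Each leaf therefore absorbs $O(a^2)$ non-useful pants, and $\#S'(R)\ge Ca^{-2}\#\T(R)$.
\end{enumerate}

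Alternatively, you could repair your own scheme with the same lemma. Keep all $P\in\T(R)$ and always use a child cuff $O(1)$-close to $x$; one exists, since every point of $P$ is $O(1)$-close to the union of its two child cuffs. Then bound the overlap multiplicity of the half-balls. A point of the half-ball of $P$ lies in a descendant $Y$ with $d_{hyp}(\gamma,\gamma_Y)\le a/2$, hence at tree distance $O(a^2)$ from $P$. So each point is covered $O(a^2)$ times.

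Two smaller points. The $O(a^2)$ overlap estimate you flag as the main obstacle is a non-issue, since each pants has area exactly $2\pi$ in $T$. What you do need, and omit, is that the half-ball embeds in $T$, i.e.\ injectivity radius at least $a/2$. Without it, distinct lifted pants in $\HH$ need not be distinct pants of $T$.
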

	As a corollary, we have:
	\begin{theorem}
	\label{thm: volume-growth}
		There exists a universal constant $a_0>0$ such that, for every $a\geq a_0$ and every $a$-good pants tree $T$, its volume growth order satisfies
		\[
		\delta_T> \delta_a:=1-5a^{-1}\log a.
		\]
	As a consequence, there exists a constant $c_a>0$, such that for all sufficiently large $R>0$, we have $$\#\T(R)\geq c_ae^{\delta_aR}.$$
	\end{theorem}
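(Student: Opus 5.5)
We deduce Theorem~\ref{thm: volume-growth} directly from Proposition~\ref{prop: supermulti} by iterating the supermultiplicative estimate.

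Fix an $a$-good pants tree $T$ and a base point $p_0$. Choose $R_0>0$ large enough that $\T(R_0)$ is nonempty; for instance $R_0$ exceeding the distance from $p_0$ to the boundary of the pair of pants containing it suffices, and then $\#\T(R_0)\ge 1$. Applying Proposition~\ref{prop: supermulti} $n$ times gives
\[
\#\T\left(R_0+na/2\right)\geq \left(Ca^{-2}e^{a/2}\right)^n\#\T(R_0)\geq \left(Ca^{-2}e^{a/2}\right)^n .
\]
For an arbitrary $R\ge R_0$, write $R=R_0+na/2+r$ with $0\le r<a/2$. Since $\#\T(R)$ is monotone in $R$, we get
\[
\#\T(R)\ge \exp\Big(n\big(a/2+\log C-2\log a\big)\Big).
\]
Because $n\ge (R-R_0)/(a/2)-1$, this yields
\[
\log\#\T(R)\ge \left(\frac{R-R_0}{a/2}-1\right)\left(\frac a2+\log C-2\log a\right).
\]
Dividing by $R$ and letting $R\to\infty$ gives
\[
\delta_T\ge \frac{2}{a}\left(\frac a2+\log C-2\log a\right)=1-\frac{4\log a}{a}+\frac{2\log C}{a}.
\]
To compare with $\delta_a$, we need $-4\log a+2\log C>-5\log a$, that is $\log a>-2\log C$. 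Since $C$ is universal, this holds for all $a\ge a_0$ with $a_0$ chosen large (and at least as large as whatever the proposition and Definition~\ref{good-pants-decom} require). The same choice of $a_0$ makes the factor $\lambda:=Ca^{-2}e^{a/2}>1$, so the iteration is genuinely growing. This gives the strict inequality $\delta_T>\delta_a$.

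For the consequence, keep the explicit bound rather than the liminf. Set $\delta'=\frac{2}{a}\log\lambda$, which exceeds $\delta_a$ by the computation above. Then
\[
\#\T(R)\ge \lambda^{n}\ge \lambda^{(R-R_0)/(a/2)-1}=\lambda^{-1}e^{-\delta' R_0}\,e^{\delta' R}\ge \lambda^{-1}e^{-\delta' R_0}e^{\delta_a R}.
\]
So one can take $c_a=\lambda^{-1}e^{-\delta'R_0}$.

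The one subtlety is that $c_a$ should not depend on $T$ if uniformity is wanted, which means $R_0$ must be uniform. Since the cuffs have lengths at most $a+1$, every pair of pants has diameter bounded in terms of $a$, so a point lies within a bounded distance of its own pants. Hence $R_0$ can be taken to depend only on $a$; at worst $R_0$ depends on $T$ and $p_0$, which the statement allows.

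There is no real obstacle here: all the geometric content is in Proposition~\ref{prop: supermulti}, and the corollary is a routine Fekete-type iteration together with tracking the constants.
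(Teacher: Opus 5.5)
Your proposal is correct and follows the paper's proof: iterate Proposition~\ref{prop: supermulti} roughly $2R/a$ times, divide by $R$ to get $\delta_T\ge 1-\tfrac{2}{a}\log(a^2/C)$, and take $a_0>C^{-2}$ to make the inequality strict (the paper takes $a_0=1+C^{-2}$). Your explicit constant $c_a=\lambda^{-1}e^{-\delta'R_0}$ makes the consequence uniform in $T$, which the paper leaves implicit. Also, the pants containing $p_0$ always lies in $\T(R)$, so any fixed $R_0>0$ works and the concern about choosing $R_0$ uniformly disappears.
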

    \begin{proof}
	 By taking logarithm to the estimate in Proposition \ref{prop: supermulti}, we obtain \[\log \#\T(R)\geq a/2-\log(a^2/C)+\log \#\T(R-a/2).\] 
    By repeating the above $\lfloor 2R/a\rfloor$ times and
    dividing by $R$, we get
    \[
    \frac{\log \#\T(R)}{R}\geq \frac {\lfloor 2R/a\rfloor}{R}\left(a/2-\log(a^2/C)\right).\]

    By letting $R$ tend to infinity, we get $\delta_T\geq 1-2a^{-1}\log (a^2/C)$. Letting $a_0=1+C^{-2}$ this completes the proof.
    \end{proof}
	\subsection{Geometry of good pants trees}
	 Let $T$ denote an $a$-good pants tree. For any two cuffs $\gamma_1,\gamma_2\in \Gamma$, let $d_{tree}(\gamma_1,\gamma_2)$ denote the combinatorial distance on the tree $\T$ given by counting the number of pairs of pants between $\gamma_1$ and $\gamma_2$. Let $d_{hyp}(\gamma_1,\gamma_2)$ denote the hyperbolic distance on $T$. The next lemma, essentially due to Kahn--Markovi\'c \cite{KM-surface group}, illustrates the geometry of good pants trees by comparing the two distances $d_{tree}$ and $d_{hyp}$.
	 \begin{lemma}[Metric Comparison Lemma]
	 \label{tree-hyp}
		There exists a universal constant $c>0$ such that the following holds. In an $a$-good pants tree $T$, any two cuffs $\gamma_1,\gamma_2$ satisfy:
		\[d_{tree}(\gamma_1,\gamma_2)\leq ca\cdot d_{hyp}(\gamma_1,\gamma_2).\]
	\end{lemma}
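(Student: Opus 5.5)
The plan is to bound from below the hyperbolic length of any path $\eta$ joining $\gamma_1$ to $\gamma_2$ by a constant (divided by $a$) times the number of pairs of pants it must traverse. Since $\T$ is a tree, the pants strictly between $\gamma_1$ and $\gamma_2$ form a chain $P_1,\dots,P_k$ with $k=d_{tree}(\gamma_1,\gamma_2)$, glued successively along cuffs $\alpha_0=\gamma_1,\alpha_1,\dots,\alpha_k=\gamma_2$. Cutting $T$ along $\alpha_i$ separates $\gamma_1$ from $\gamma_2$, so $\eta$ meets every $\alpha_i$. Passing to a subpath, I get points $x_i\in\alpha_i$ such that the segment $\eta_i$ from $x_{i-1}$ to $x_i$ lies in $P_i$. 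Hence $d_{hyp}(\gamma_1,\gamma_2)\ge\sum_i \ell(\eta_i)$, and it suffices to show that every two consecutive crossings cost at least $c_1/a$ for a universal $c_1>0$.

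\emph{Step 1 (geometry of one good pair of pants).} Cut $P_i$ into two right-angled hexagons with alternate sides $\ell(\alpha)/2\in[a/2,(a+1)/2]$. Standard hexagon trigonometry gives the following. The seam between two cuffs has length comparable to $e^{-a/2}$. Away from the seams the pants is a thin collar, and a point of $\alpha_{i-1}$ at distance $t$ along $\alpha_{i-1}$ from the foot of the seam to $\alpha_i$ has distance at least $f(t)$ to $\alpha_i$, where $f(t)\ge c_2\min(t,1)$ once $t\gg e^{-a/2}$. Equivalently, a cheap crossing ($\ell(\eta_i)\le c_2/2$) must start and end within distance $O(\ell(\eta_i))+O(e^{-a/2})$ of the two feet of the $(\alpha_{i-1},\alpha_i)$-seam.

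\emph{Step 2 (twist mismatch).} On the common cuff $\alpha_i$ there are two seam feet: one coming from $P_i$ (toward $\alpha_{i-1}$) and one from $P_{i+1}$ (toward $\alpha_{i+1}$). Measured along $\alpha_i$, their offset is governed by the reduced twist $s(\alpha_i)$, up to the symmetric choices $0$ or $\ell/2$ for the two seams of a pants on a given cuff. Since $2^{-1}\le s\le 2$ and $\ell\ge a>10$, the feet are at distance at least $1/2-O(e^{-a/2})$ apart. Suppose both $\eta_i$ and $\eta_{i+1}$ were cheap. By Step 1, $x_i$ would then lie near both feet, which is impossible. So for each $i$ we get $\ell(\eta_i)+\ell(\eta_{i+1})\ge c_3$. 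Summing over disjoint consecutive pairs gives $d_{hyp}\ge c_3\lfloor k/2\rfloor$, which is much stronger than $k/(ca)$ for $k\ge 2$.

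\emph{Step 3 (small tree distance; the expected obstacle).} The cases $k\le1$ are where the multiplicative form is delicate. For $k=0$ there is nothing to prove. For $k=1$ the only lower bound available is the seam length, of order $e^{-a/2}$, which cannot be absorbed by a factor $ca$. The dangerous configuration is two cuffs of a single pants, because the endpoint of the path on $\gamma_1$ is free. I would first try to fix the convention for $d_{tree}$ so that it counts the pants the path must \emph{cross} between two successive cuff crossings. Under that convention a single crossing is always preceded or followed by a twisted gluing, and Step 2 applies. If the literal convention is kept, I would verify the $k=1$ case by using the freedom in how the lemma is applied later, namely only for $d_{hyp}$ at least a fixed multiple of $a$ in the growth estimate. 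I regard pinning this case down as the main point to check. The $k\ge2$ estimate, in which the factor $a$ only absorbs the constants from Steps 1 and 2, is the robust core of the argument.
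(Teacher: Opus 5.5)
Your Step 1 is false, and the local strategy fails with it. The seam joining two cuffs of an $a$-good pair of pants has length $d$ of order $e^{-a/4}$, not $e^{-a/2}$. A point of $\alpha_{i-1}$ at distance $t$ from the foot of the $(\alpha_{i-1},\alpha_i)$-seam lies at distance $f(t)$ from $\alpha_i$, where $\sinh f(t)=\sinh d\,\cosh t$ (Lambert quadrilateral). So $f(t)\approx d\cosh t$ stays tiny for all $t$ up to $a/4-O(1)$. The cheap zone around each seam foot therefore has length of order $a$, not $O(1)$, and the twist offset in Step 2 separates nothing. Take a branch of the tree on which the two feet on $\alpha_i$ are $s(\alpha_i)\le 2$ apart. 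The point halfway between them is within about $d\cosh 1$ of both $\alpha_{i-1}$ and $\alpha_{i+1}$. Hence $\ell(\eta_i)+\ell(\eta_{i+1})$ can be $O(e^{-a/4})$, and $d_{hyp}\ge c_3\lfloor k/2\rfloor$ is false. That your bound would beat the lemma by a factor $a$ was a warning sign: the factor $a$ in Lemma~\ref{tech} is sharp.

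The missing idea is that the lower bound comes only from twists accumulating along the chain. If the path crosses $P_j$ at position $t_j$ relative to the seam foot, it pays at least $\sinh^{-1}(\sinh d\,\cosh t_j)$. Up to what it spends moving along cuffs, $t_{j+1}\approx t_j+\tau_j$, where $\tau_j$ is the signed twist. When the twists have a consistent sign (Kahn--Markovi\'c's setting, all twists $+1$), $|t_j|$ grows linearly. So among any $a$ consecutive crossings, one costs a universal amount. This is Lemma~\ref{tech}, which the paper simply quotes and applies to subarcs of length $e^{-5}$. (For the same reason, no cuff-by-cuff argument using only $|s(\alpha_i)|\ge 2^{-1}$ can work: if the signed twists alternate along a chain, the drift cancels and a path of bounded length crosses on the order of $e^{a/4}$ cuffs.)

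On Step 3, you are right that the purely multiplicative inequality fails for small $d_{hyp}$, but the failure is not confined to $k\le 1$. With twists near $+1$, a path can cross about $a/8$ cuffs at total cost exponentially small in $a$. What the subdivision argument really gives is $d_{tree}\le ca\,(d_{hyp}+1)$, since the number of subarcs is $\lceil e^5\ell(\sigma)\rceil$ rather than $e^5\ell(\sigma)$. That additive version is all Lemma~\ref{Sdistr} needs. There the lemma is applied under the upper bound $d_{hyp}\le a$, not, as you suggest, only when $d_{hyp}$ is large.
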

	\begin{proof}
	Let $\pi:\mathbb H^2\to T$ be the universal cover of the pants tree. Let $|\Gamma|$ be the union of all cuffs in $T$, and let $\lambda_\Gamma$ denote the geodesic lamination $\pi^{-1}(|\Gamma|)$ on $\mathbb H^2$. It suffices to prove that any geodesic arc $\sigma\subset \mathbb H^2$ passes through at most $\lfloor ca\ell(\sigma)\rfloor$ geodesic leaves in $\lambda_\Gamma$, for some uniform constant $c>0$.  
    
    We invoke:
	\begin{lemma}[{\cite[Lemma 2.2]{KM-surface group}}]\label{tech}
		
		Let $\sigma$ be a geodesic segment in $\mathbb{H}^2$ of length less than $e^{-5}$, which transversely intersects $\lambda_\Gamma$. Then there exists a universal constant $c_1>0$, such that $\#(\sigma\cap \lambda_\Gamma)\leq c_1a$.
	\end{lemma}
	Thus any length $e^{-5}$ geodesic arc in $\mathbb H^2$ passes through at most $c_1a$ many geodesic lines in the geodesic lamination $\lambda_\Gamma$. For general geodesic arc $\sigma$, by dividing it into length $e^{-5}$ subarcs,  we obtain that $\sigma$ passes through at most
	$\lfloor c_1ae^5\ell(\sigma)\rfloor$
	geodesic lines in $\lambda_\Gamma$. Letting $c=c_1e^5$, this completes the proof.
	\end{proof}
	
	\begin{remark}
	Lemma \ref{tree-hyp} can be viewed as a reformulation of Kahn--Markovi\'c's technical lemma (Lemma \ref{tech}), which was stated for the case when the cuff lengths of $\Gamma$ are exactly equal to $a$, and the twist parameters of $\Gamma$ are exactly equal to $1$. But their argument holds almost verbatim, when $\ell(\gamma )\in [a,a+1]$, and $s(\gamma)\in [2^{-1},2]$, $\forall \gamma \in \Gamma$.
	\end{remark}
	
	\begin{remark}
    The Metric Comparison Lemma crucially requires that all the twists are uniformly bounded away from zero, and fails when there are sufficiently many zero twists. Since the lengths of the seams for $a$-good pants behave like $e^{-a/4}$, we can at best expect $d_{tree}\leq e^{a/4}d_{hyp}$ when all twists are zero.
	\end{remark}

	\subsection{Boundary pairs of pants}
	In order to estimate the number of pairs of pants in $\T(R)$, we introduce two subsets $S^\prime(R)\subset S(R)\subset \T(R)$. Then we dominate $\# \T(R)$ by the number of pairs of pants in $S^\prime(R)$. This is established in Lemma \ref{lem: bad-good-est}.
    
    \begin{definition}
        A pair of pants $P$ in $\T(R)$ is said to be a \emph{boundary pair of pants} if 
	    \[
	    \deg_{\T(R)}(P)\leq 2.
	    \]
	   Let $S(R)$ denote the set of all boundary pairs of pants in $\T(R)$.
    \end{definition}

	\begin{lemma}
	\label{lem: S-estimate}
    For every $R>0$ we have
	$2 \#S(R)\geq \#\T(R).$
	\end{lemma}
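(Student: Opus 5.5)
The plan is to reduce the statement to a purely combinatorial fact about finite subtrees of the infinite trivalent tree, and then prove that fact by a degree count.

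\textbf{Step 1: $\T(R)$ is a finite subtree.} First I check that $\T(R)$ really is a finite subtree of $\T$, i.e.\ a finite connected subgraph.

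For finiteness, I will use that each pair of pants in $T$ has area $2\pi$. Moreover, an $a$-good pants tree has bounded geometry locally, since all cuff lengths lie in $[a,a+1]$. Hence only finitely many pants meet the ball of radius $R$ about $p_0$.

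For connectedness, let $P\in\T(R)$ and pick a point $x\in P$ with $d(p_0,x)\leq R$. Take a shortest path from $p_0$ to $x$. Because $\T$ is a tree and each cuff separates $T$, this path must pass through every pair of pants on the tree path in $\T$ from the pants containing $p_0$ to $P$. Every point on the path lies within distance $R$ of $p_0$. Therefore every pants on that tree path belongs to $\T(R)$, so $\T(R)$ is connected.

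If $\T(R)$ is empty or a single vertex, the inequality $2\#S(R)\geq\#\T(R)$ is immediate.

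\textbf{Step 2: the degree count.} Let $n=\#\T(R)\geq 2$, and let $n_i$ be the number of vertices of degree $i$ in $\T(R)$, for $i=1,2,3$. Degree $0$ cannot occur when $n\geq 2$, and degrees are at most $3$ because $\T$ is trivalent.

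Since $\T(R)$ is a tree, it has $n-1$ edges, so
\[
n_1+2n_2+3n_3=2(n_1+n_2+n_3)-2.
\]
This simplifies to $n_3=n_1-2$.

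Now $S(R)$ consists exactly of the vertices of degree $\leq 2$, so $\#S(R)=n_1+n_2\geq n_1>n_3$. Hence
\[
n=\#S(R)+n_3<2\#S(R),
\]
which is the claimed inequality.

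\textbf{Main point of care.} The only step needing genuine geometric input is the connectedness of $\T(R)$ in Step 1, namely that a shortest path from $p_0$ to a pair of pants crosses all intermediate pants. I expect this to follow directly from the separation property of cuffs in a pants tree.
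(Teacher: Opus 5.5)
Your proof is correct and uses the same degree-counting argument as the paper; your Step~1 adds the finiteness and connectedness of $\T(R)$, which the paper takes for granted. Your edge count is also the accurate one: the paper's handshake identity $2\#\T(R)=n_1+2n_2+3n_3$ omits the $-2$ coming from a tree having $\#\T(R)-1$ edges, so the true relation is your $n_3=n_1-2$ rather than the paper's $n_1=n_3$, and this only strengthens the conclusion.
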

	
	\begin{proof}
    Let $n_i$ be the number of vertices of degree $i$ in $\T(R)$. Since $\T$ is trivalent, $i$ can be 1,2 or 3.  Then $$\#\T(R)=n_1+n_2+n_3.$$ 
    Since $\T(R)$ is a tree, the handshaking lemma gives
    $$2\#\T(R)=n_1+2n_2+3n_3.$$ Combining the above two equalities yields $n_1=n_3$, and hence $$2\#S(R)=2(n_1+n_2)=n_1+2n_2+n_3\geq  n_1+n_2+n_3=\#\T(R).$$
	\end{proof}
	
	Let $P_0$ be the pair of pants in $\T(R)$ containing $p_0$. The tree $\T(R)$ is rooted at $P_0$ from now on. For every non-root pair of pants $P\in\T(R)$ we denote by $\gamma_P$ the cuff of $P$ that is shared with the parent of $P$, called the \emph{incoming cuff} of $P$.
	 
	 \begin{definition}
	     A boundary pair of pants $P\in S(R)$ is said to be \emph{useful}, if either 
	 \begin{enumerate}
	     \item $\gamma_P$ is fully contained in the hyperbolic ball centered at $p_0$ with radius $R$,
	    \item or $\deg_{\T(R)}(P)=1$, which is said to be a leaf pants.
	 \end{enumerate}
	   Let $S^\prime(R)\subset S(R)$ denote the set of all useful boundary pairs of pants. 
	 \end{definition}

	To dominate $\#\T(R)$ by the number of useful pairs of pants, we prove the following lemma using the Metric Comparison Lemma.  
     
     \begin{lemma}\label{Sdistr}
     Assume $P\in S(R)\setminus S^\prime(R)$. Then for any descendant pair of pants $L\in \T(R)$ of $P$ we have 
     \[d_{tree}(\gamma_P,\gamma_L)\leq Ca^2,\] for some universal $C>0$.
     \end{lemma}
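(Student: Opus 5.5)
Since $P\in S(R)\setminus S'(R)$, the pants $P$ is not a leaf, so $\deg_{\T(R)}(P)=2$: it has its parent and exactly one child in $\T(R)$. Moreover, $\gamma_P$ is not fully contained in the ball $B(p_0,R)$. The plan is to show that every descendant $L$ of $P$ in $\T(R)$ lies close to $\gamma_P$ in the hyperbolic metric, and then to convert this to a tree-distance bound via the Metric Comparison Lemma (Lemma~\ref{tree-hyp}).

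\textbf{Step 1: a point of $\gamma_P$ far from $p_0$.} Pick $x\in\gamma_P$ with $d_{hyp}(p_0,x)>R$. Any point $y\in\gamma_P$ is within $\ell(\gamma_P)/2\le (a+1)/2$ of $x$ along $\gamma_P$. Hence every point of $\gamma_P$ is at distance greater than $R-(a+1)/2$ from $p_0$.

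\textbf{Step 2: descendants are hyperbolically close to $\gamma_P$.} Let $L$ be a descendant of $P$ in $\T(R)$, and choose $z\in L$ with $d_{hyp}(p_0,z)\le R$. Since $\T$ is a tree and $L$ lies in the subtree beyond $\gamma_P$, any path from $p_0$ (which is in $P_0$, on the parent side of $\gamma_P$) to $z$ must cross $\gamma_P$. Take a geodesic from $p_0$ to $z$, and let $y\in\gamma_P$ be a point where it crosses. Then
\[
d_{hyp}(y,z)=d_{hyp}(p_0,z)-d_{hyp}(p_0,y)< R-\bigl(R-(a+1)/2\bigr)=(a+1)/2 .
\]
The geodesic segment from $y$ to $z$ runs from $\gamma_P$ into $L$. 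If $L\neq P$, it crosses $\gamma_L$ at some point $w$ with $d_{hyp}(y,w)\le (a+1)/2$. Therefore $d_{hyp}(\gamma_P,\gamma_L)\le (a+1)/2$.

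\textbf{Step 3: convert to tree distance.} Lemma~\ref{tree-hyp} gives
\[
d_{tree}(\gamma_P,\gamma_L)\le ca\cdot (a+1)/2\le Ca^2
\]
for a universal $C$, for instance $C=c$ when $a\ge 1$. The case $L=P$ is trivial.

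\textbf{Main obstacle.} The delicate point is Step 2. One must justify that the geodesic from $p_0$ to $z$ genuinely crosses $\gamma_P$ and then $\gamma_L$. This follows from the tree structure: each cuff separates $T$, so $\gamma_P$ separates $P_0$ from all descendants of $P$, and $\gamma_L$ separates $P$ from $L$. One also needs the first crossing with $\gamma_P$ to come before the crossing with $\gamma_L$ along the segment. To ensure this, I would take $y$ to be the last crossing of $\gamma_P$ along the geodesic; after $y$ the geodesic stays in the subtree side of $\gamma_P$, so it must still cross $\gamma_L$ to reach $z\in L$.

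A secondary point is the degenerate case where the chosen $z$ lies on $\gamma_L$ itself. This is harmless, since then $d_{hyp}(\gamma_P,\gamma_L)\le d_{hyp}(y,z)$ directly.
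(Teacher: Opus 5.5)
Your proof is correct and follows the paper's strategy: show $d_{hyp}(\gamma_P,\gamma_L)=O(a)$ using that $\gamma_P$ leaves the radius-$R$ ball, then apply the Metric Comparison Lemma. The only difference is how you get the hyperbolic bound. The paper lifts to $\mathbb{H}^2$, bounds the chord $B_{\tilde p_0}(R)\cap\tilde\gamma_P$ by $\ell(\gamma_P)$, and uses (via a figure) that the cap of the ball beyond $\tilde\gamma_P$ lies within half the chord length of its midpoint; your triangle-inequality argument along a minimizing geodesic in $T$ reaches the same $(a+1)/2$ bound more directly, without that geometric fact.
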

     
     \begin{proof}
     	 We prove the lemma by bounding $d_{hyp}(\gamma_P,\gamma_L)$ from above. Let $\Tilde p_0\in\HH$ denote a lift of the base point $p_0$, and $B_{\Tilde p_0}(R)$ the radius $R$ hyperbolic ball. Let $\Tilde\gamma_L$ be a lift of the incoming cuff $\gamma_L$ meeting $B_{\Tilde p_0}(R)$, and let $\Tilde\gamma_P$ be a lift of $\gamma_P$ separating $\Tilde\gamma_L$ from the center point $\Tilde p_0$. 
     	 \begin{figure}[h]
	    \centering
	    \includegraphics[width=0.8\textwidth]{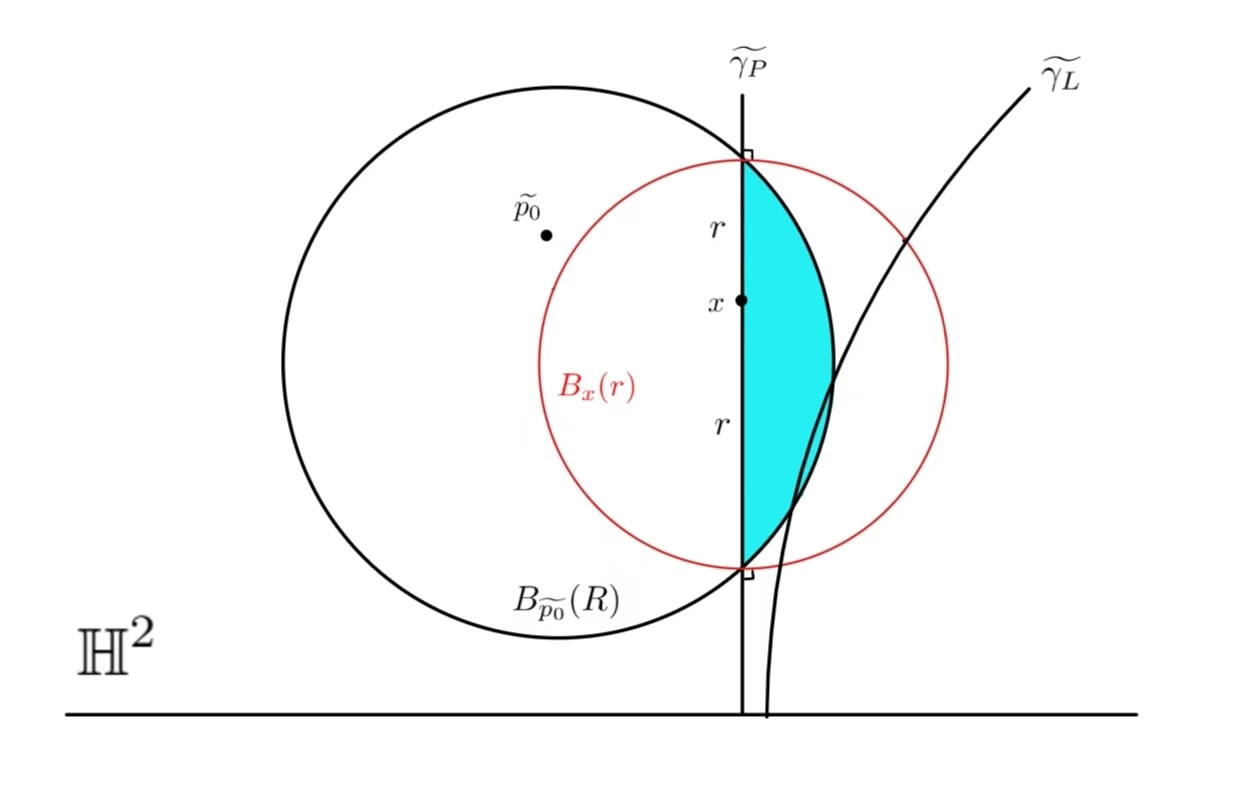}
	    \caption{A lift to the universal cover.}
	    \label{fig1}
	\end{figure}

Since $P\notin S^\prime(R)$, by definition $\gamma_P$ is not fully contained in the radius $R$ hyperbolic ball in $T$, and hence we have the following estimate
\begin{equation}
\label{arcleng}
    \ell\left(B_{\Tilde{p_0}}(R)\cap \Tilde {\gamma_P}\right)\leq \ell(\gamma_P)\leq  a+1.
\end{equation} 
	This implies that (see Figure \ref{fig1}, where the blue part is included in the red ball $B(x,r)$)
	\[
	d_{\mathbb H}(\Tilde\gamma_P,\Tilde\gamma_L)\leq \frac{\ell\left(B_{\Tilde{p_0}}(R)\cap \Tilde {\gamma_P}\right)}{2}\leq a.
	\]
		Therefore, we have $d_{hyp}(\gamma_P,\gamma_L)\leq a$. By applying the Metric Comparison Lemma we prove the lemma. 
     \end{proof}
     
     We are going to dominate $\#\T(R)$ by the number of useful boundary pants.     
    \begin{lemma}
    \label{lem: bad-good-est}
	There exists a universal constant $C>0$, such that for any $a$-good pants tree $T$, the following holds
		$$\#S'(R)\geq Ca^{-2}\#\T(R).$$
	\end{lemma}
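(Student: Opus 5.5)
By Lemma~\ref{lem: S-estimate} we have $\#\T(R)\leq 2\#S(R)=2\#S'(R)+2\#(S(R)\setminus S'(R))$. It therefore suffices to show that every non-useful boundary pair of pants can be charged to a useful one, with each useful pants receiving at most $O(a^2)$ charges. Equivalently, we aim for
\[
\#\big(S(R)\setminus S'(R)\big)\leq C' a^2\,\#S'(R),
\]
which gives $\#\T(R)\leq (2+2C'a^2)\#S'(R)$ and hence the lemma.

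\emph{The charging map.} Take $P\in S(R)\setminus S'(R)$. Then $P$ is not a leaf, so $\deg_{\T(R)}(P)=2$, and $P$ has exactly one child in $\T(R)$. (If $P$ is the root, it has no incoming cuff; there is at most one such pants, so we absorb it into the constant.) Starting from $P$, we walk downward in $\T(R)$. Since $\T(R)$ is finite, this walk eventually reaches a leaf. We always choose, say, the first child in a fixed ordering, and let $L=\phi(P)$ be the leaf reached. Leaves lie in $S'(R)$ by definition, so $\phi$ maps $S(R)\setminus S'(R)$ into $S'(R)$. By Lemma~\ref{Sdistr}, $d_{tree}(\gamma_P,\gamma_L)\leq Ca^2$. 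So $L$ is a descendant of $P$ at tree distance at most $Ca^2+O(1)$.

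\emph{Bounding the fibres.} Fix a leaf $L$. Every $P$ with $\phi(P)=L$ is an ancestor of $L$, because each $P$ lies on the unique path from the root to $L$. Moreover, $P$ is within tree distance $Ca^2+1$ of $L$. The path from $L$ towards the root contains at most one vertex at each distance, so $\#\phi^{-1}(L)\leq Ca^2+2$. Summing over leaves gives
\[
\#\big(S(R)\setminus S'(R)\big)\leq (Ca^2+2)\,\#S'(R)+1.
\]
Combining this with Lemma~\ref{lem: S-estimate}:
\[
\#\T(R)\leq 2\#S(R)\leq 2(Ca^2+3)\#S'(R)+2.
\]
Since $S'(R)$ is nonempty (the tree is finite, so it has a leaf, unless it is a single vertex, which is trivial), we obtain $\#S'(R)\geq C''a^{-2}\#\T(R)$ for a universal $C''$, using $a>10$.

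\emph{Main obstacle.} The delicate point is the applicability of Lemma~\ref{Sdistr}. The lemma needs every descendant $L$ in $\T(R)$ to have an incoming cuff lift meeting $B_{\Tilde p_0}(R)$, separated from $\Tilde p_0$ by $\Tilde\gamma_P$. This holds because $L\in\T(R)$ means the pants $L$ meets the ball, and by the tree structure any path from $p_0$ into $L$ crosses $\gamma_P$ and then $\gamma_L$.

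The remaining care concerns degenerate cases. These are the root, and the situation where $\T(R)$ is a single vertex or where $P$ has degree $2$ via its parent only; the bounded tree-depth argument handles all of them uniformly. The essential input is the uniform depth bound $Ca^2$ from Lemma~\ref{Sdistr}, which converts the ``chain'' structure below a non-useful pants into a fibre bound.
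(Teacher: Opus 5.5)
Your proposal is correct and follows essentially the same argument as the paper. You send each non-useful boundary pants to a descendant leaf, use Lemma~\ref{Sdistr} to bound the depth by $O(a^2)$, bound each fibre by the ancestors of the leaf within that depth, and combine with Lemma~\ref{lem: S-estimate}. Your extra bookkeeping for the root (which has no incoming cuff) and for the additive constants is a harmless refinement. One small point: the one-vertex case $\T(R)=\{P_0\}$ is not ``trivial'' but a degenerate exception to the literal statement, since there $S'(R)$ is empty; the paper also silently ignores this case.
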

	\begin{proof}
		
		 Let 
		$\mathrm{Leaf}:S(R)\setminus S^\prime(R) \to S^\prime(R)$
		be a map that sends each $P$ to a descendent leaf pants of $P$. Let $C_1$ be the constant in Lemma \ref{Sdistr}.
		We claim that each leaf pants $L$ has at most $C_1a^2$ pre-images under this map. In fact, if $\mathrm{Leaf}(P)=L$, then $P$ is an ancestor of $L$ and there are at most $C_1a^2$ edges between $P$ and $L$. Along the unique descendant path from the root $P_0$ to $L$, there are at most $C_1a^2$ such vertices. This proves the claim. Since each fiber of the map $\mathrm{Leaf}$ has at most $C_1a^2$ pairs of pants, we get 
		\[ C_1a^2\#S'(R)\geq \#S(R)-\#S'(R).\]
		 This further implies that
		\[
		S^\prime(R)\geq \frac{1}{1+C_1a^2}\#S(R)\geq \frac{a^2}{2(1+C_1a^2)}a^{-2}\#\T(R),
		\]
		where the second inequality comes from Lemma  \ref{lem: S-estimate}. Letting $C=\tfrac{1}{2(1+C_1)}$ completes the proof.
	\end{proof}
	Note that Lemma \ref{Sdistr} and Lemma \ref{lem: bad-good-est} use the Metric Comparison Lemma, hence only apply to the pants trees with twists uniformly bounded away from zero.
	 
	\subsection{Proof of Proposition \ref{prop: supermulti}}

	Proposition \ref{prop: supermulti} follows from Lemma \ref{lem: bad-good-est} and the following inequality:
    there exists a universal constant $C>0$ such that for every $R>0$,
	\begin{equation}\label{a/2ite}
	    \#\T(R+a/2)\geq Ce^{a/2}\#S^\prime(R).
	\end{equation}
 
We are going to prove (\ref{a/2ite}) by using a classical area-estimate argument: Let $\Omega\subset T$ denote the radius $R+a/2$ hyperbolic ball centered at $p_0$ in $T$. The area of a single hyperbolic pair of pants is equal to $2\pi$. Since $\Omega$ is included in the union of all pairs of pants from $\T(R+a/2)$, we have 
        \begin{equation}\label{vol-E1}
            2\pi \#\T\left(R+a/2\right)\geq \mathrm{Area}\left(\Omega\right),
        \end{equation}
        For any useful boundary pair of pants $P\in S^\prime(R)$ we construct a subdomain $\Omega(P)\subset\Omega$ as follows.
        \begin{figure}[ht]
        \centering
       \includegraphics[width=0.8\textwidth]{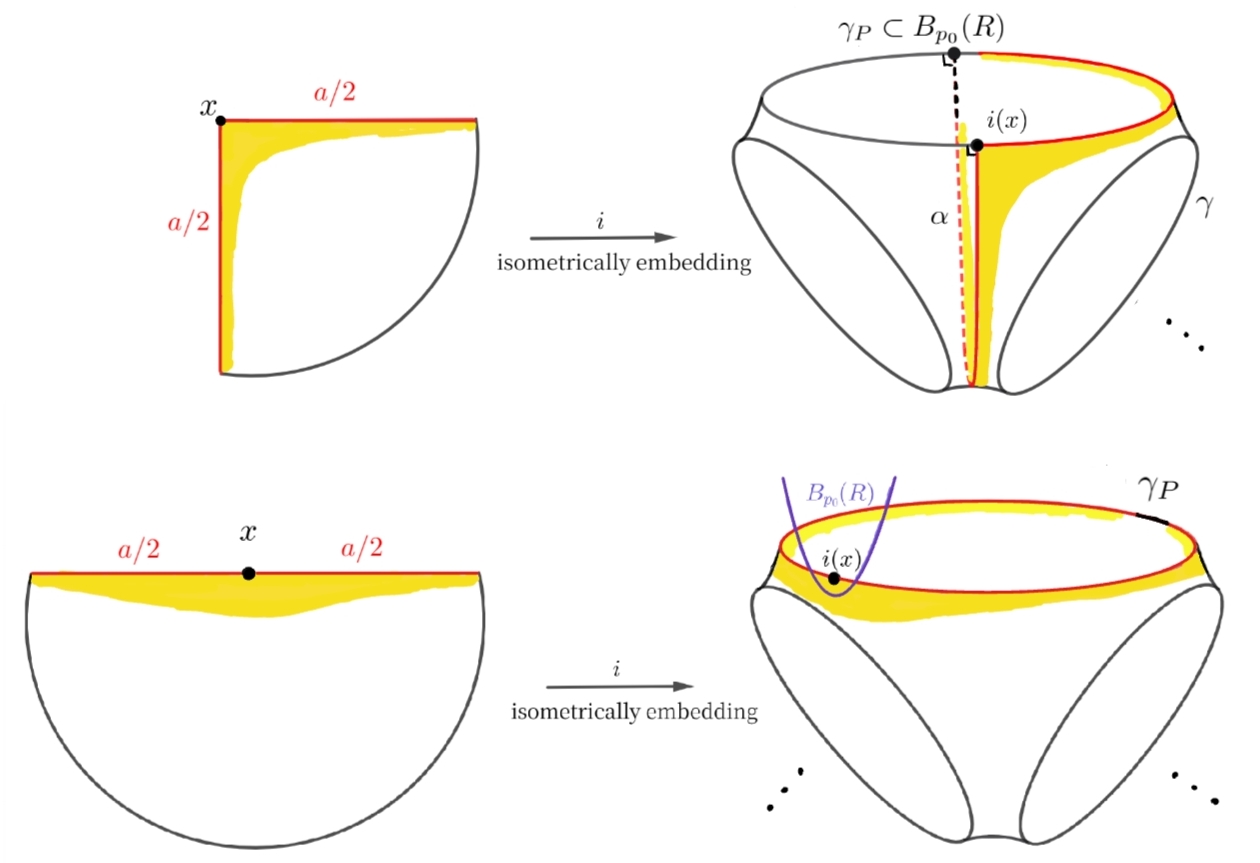}
        \captionsetup{font={small}}
        \caption{ The map $i$ is an isometrically immersing from radius-$a/2$ half ball (or $1/4$ ball) to the pants tree, and is determined by mapping $x$ to $i(x)$ and the yellow part to  the yellow part. That the map $i$ is embedding follows from the fact \cite[Proposition 2.28]{M} that the injectivity radius of $T$ is at least $a/2$ .  
        }
        \label{fig2}
        \end{figure}
      \begin{enumerate}
          \item  If $\gamma_P$ is fully contained in the radius-$R$ ball of $T$. Then $\Omega(P)$ is the image of  the upper isometric embedding $i$ from Figure \ref{fig2}, where $\gamma$ is the cuff of $P$ avoiding the radius-$R$ ball, and $\alpha$ is the orthogonal geodesic from $\gamma_P$ to itself. 
          \item If $P$ is a leaf pants. Then $\Omega(P)$ is the image of the lower isometric embedding $i$ from Figure \ref{fig2}, where $x\in\gamma_P$ is a point lying in the radius-$R$ ball in $T$. 
      \end{enumerate}
      Since $\Omega(P)$ leads outside $\T(R)$, those subregions are mutually disjoint. Combining this with the area estimate of hyperbolic ball $\mathrm{Area}(\Omega(P))=\frac{\pi}{2}(\cosh \frac a2-1)\geq ce^{a/2}$, for some universal $c>0$, we have
        \begin{equation}\label{vol-E2}
            \mathrm{Area}\left(\Omega\right)\geq \sum_{P\in S'(R)}\mathrm{Area}\left(\Omega(P)\right)\geq ce^{a/2}\#S^\prime(R).
        \end{equation}
        Combining  (\ref{vol-E1}) and (\ref{vol-E2}) implies (\ref{a/2ite}). This completes the proof.

\section{Diameter of random cover}
\label{probabilistic argument}
	In this section, we prove the Random Cover Theorem. Let $S$ be a closed hyperbolic surface with an $a$-good pants decomposition $\Gamma$, whose pants decomposition graph $\G$ is simple. We want to show that for any $\epsilon>0$ the degree $m$ random cover $\mathcal S_m$ has diameter $$\diam(\mathcal S_m)\leq \left(\frac{1}{\delta_a}+\epsilon\right)\log m,$$
	with high probability as $m\to\infty$.
	\subsection{Pants exploration with algorithm $\mathcal A$} 
	Let $\G$ denote the pants decomposition graph of $(S,\Gamma)$. Putting $m$ pairs of pants above each vertex of $\G$, let $\mathcal P_m$ denote the collection of these pairs of pants. The pairs of pants in $\mathcal{P}_m$ above the vertex $v\in \G$ is said to be \emph{of type $v$}. Recall that the degree $m$ surface covers of $S$ are constructed by assembling pants in $\mathcal P_m$ along edges of $\G$. We generalize this notion by allowing a subset of $\mathcal P_m$ engaged in the assembling.
	\begin{definition}
		An \emph{assembly surface} $E$ is a surface constructed by assembling pairs of pants from a subset of $\mathcal P_m$ along edges of $\G$. An assembly surface is equipped with the induced pants decomposition structure, whose pants are labeled by $\mathcal P_m$. 
	\end{definition}
	
	\begin{remark}
		Since an assembly surface  $E$ is constructed by assembling along edges, there is a natural map $E\to S$. Surface covers form a special class of assembly surfaces. 
	\end{remark}

	 An assembly surface $E$ may have boundary cuffs, and we let $\partial E$ denote the set of boundary cuffs. For assembly surfaces $E$ and $E^\prime$ if there exists a label-preserving embedding $E\hookrightarrow E^\prime$, then $E$ is said to be an assembly subsurface of $E^\prime$, denoted by $E\subset E^\prime$. If in addition $E$ is a single pair of pants, then $E$ is said to be a pair of pants in $E^\prime$, denoted by $E\in E^\prime$. 
	\subsubsection{Pants exploration}
	    Let $S_m$ be a degree $m$ surface cover of $S$, and let $E\subset S_m$ be an assembly subsurface with nonempty boundary.  Given a boundary cuff $\gamma\in \partial E$ we are going to construct a new assembly surface $E^\prime$ such that $
	 E\subset E^\prime\subset S_m$.  To do so, let $P$ be the pair of pants in $S_m$ that has the orientation inversion of $\gamma$ as a boundary. Two events may happen:
	\begin{enumerate}
	    \item Good step: $P\notin E$. In this case $E^\prime$ is constructed by gluing $P$ to $E$ along the boundary $\gamma$. 
	    \item Bad step: $P\in E$. In this case both of $\gamma$ and its orientation inversion are in the boundary $\partial E$. We construct $E^\prime$ by gluing $\gamma$ and its orientation inversion.
	\end{enumerate}
	The new assembly surface $E^\prime$ is said to be \emph{obtained from $E$ by exploring beyond the boundary cuff $\gamma$}. By iterating this operation we can explore the surface $S_m$. 
	\subsubsection{Exploration with algorithm}
	\begin{definition}
		An algorithm $\mathcal A$ is a map that, for any assembly surface $E$ with nonempty boundary and any pair of pants $P\in E$, selects a boundary cuff $\mathcal A(E,P)\in\partial E$.
	\end{definition}
 	\begin{definition}
 	    For $P\in\mathcal P_m$ and a surface cover $S_m$, the \emph{pants exploration  of $S_m$ with algorithm $\mathcal A$ rooted at $P$} is a sequence of assembly surfaces
 	\[
 	P=E_0\subset E_1\subset E_2\subset\cdots\subset S_m,
 	\]
 	where the assembly surface $E_{i+1}$ is obtained from $E_i$ by exploring beyond the boundary cuff $\mathcal A(E_i,P)\in\partial E_i$. 
 	\end{definition}

 	\par  We denote by $\E_i$ the pants exploration of the random surface $\mathcal S_m$ rooted at some $P\in\mathcal P_m$. Hence $\E_i$  is a random variable. The following proposition establishes the probability of getting a bad $i$-th step from $\E_{i-1}$ to $\E_i$.
	 \begin{proposition}\label{badprob}
	 	There exists a universal constant $c>0$ such that for $i\leq m/4$ the probability that the step i is a bad step satisfies
	 	\[
	 	\mathbb P(\text{step i is bad})\leq c\frac{i}{m}.
	 	\]
	 \end{proposition}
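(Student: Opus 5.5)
We expose the randomness of the uniform edge model only as the exploration needs it, in the standard way for random permutations. For each oriented edge $e=(u,v)$ of $\G$, the permutation $\sigma_e$ is uniform in $S_m$, and the permutations for distinct edges are independent. Exploring beyond a boundary cuff $\gamma$ of $\E_{i-1}$ amounts to revealing one value of one of these permutations. Say $\gamma$ lies on a pants of type $u$ and corresponds to the half-edge of $e$ at $u$. We reveal either $\sigma_e(j)$ or $\sigma_e^{-1}(j)$, depending on the chosen orientation of $e$. Because $\G$ is simple, every boundary cuff of a pants of type $u$ corresponds to a distinct edge at $u$, so the half-edge determines both the edge and the direction unambiguously. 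We then condition on the whole history $\E_0,\dots,\E_{i-1}$, which records every value of each $\sigma_e$ revealed so far. Given this history, the principle of deferred decisions says the unrevealed value $\sigma_e(j)$ is uniform over the targets of $\sigma_e$ not yet used. Here we use that the algorithm $\mathcal A$ is deterministic given $\E_{i-1}$ and the root, so the choice of which cuff to reveal is measurable with respect to the history.

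The main step is to bound the number of available targets and the number of ``bad'' targets. At most $i-1$ values have been revealed in total, since each step reveals at most one value. In particular at most $i-1$ values of $\sigma_e$ are known, so at least $m-(i-1)\geq 3m/4$ targets $v_k$ remain available. The step is bad exactly when the revealed pants $P=v_k$ already belongs to $\E_{i-1}$. The exploration adds at most one pants per step, so $\E_{i-1}$ contains at most $i$ pants, and in particular at most $i$ pants of type $v$. Hence, conditionally on the history,
\[
\mathbb P(\text{step } i \text{ is bad}\mid \E_0,\dots,\E_{i-1})\leq \frac{i}{m-i+1}\leq \frac{i}{3m/4}=\frac{4}{3}\cdot\frac{i}{m}.
\]
Taking expectations gives the proposition with $c=4/3$.

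The subtle point is the deferred-decision claim: that the conditional law of the revealed value really is uniform on the unused targets. We must make sure that the information in the history about $\sigma_e$ consists exactly of the revealed pairs $(j,\sigma_e(j))$, with nothing more. Bad steps glue two cuffs that are both already present, and in doing so they reveal a pair as well, so they also count toward the at most $i-1$ revealed pairs. We must also check that the cuff chosen at step $i$ really is unrevealed. A boundary cuff of $\E_{i-1}$ has by definition not been glued, so its value is unknown. Once these facts are in place, independence across edges together with uniformity of a random permutation conditioned on finitely many of its values completes the argument.
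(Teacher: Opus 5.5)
Your proof is correct and follows essentially the paper's argument. You condition on the history (equivalent to conditioning on $\E_{i-1}=E$), use that the unrevealed value of $\sigma_e$ is uniform over the unused targets, bound the bad targets by $\#\E_{i-1}\leq i$ and the available ones below by about $m-i$, and get $c=4/3$. The only difference is that the paper writes the exact conditional probability $\#\partial E|_v/(m-\#E|_v+\#\partial E|_v)$, while you spell out the deferred-decisions justification more explicitly.
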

	\begin{proof}
	According to the formula of total probability
	\[
	\mathbb P(\text{step i is bad})=\sum_{E}\mathbb P\left(\text{step i is bad}|\, \E_{i-1}=E\right)\mathbb P\left(\E_{i-1}=E\right),
	\]
	where the summation is taken over the set of all assembly surfaces. To prove the proposition, it suffices to prove that the probability of getting a bad step i given $\E_{i-1}=E$ is bounded from above by $c\tfrac{i}{m}$, for some universal $c>0$. 
	\par  Given $E$, let $e\subset\G$ be the edge type of the cuff $\mathcal A(E,P)$. Let $v$ denote the type of the pair of pants obtained by exploring beyond $\mathcal A(E,P)$. Let $E|_v$ denote the set of all pairs of pants in $E$ of type $v$, and let $\partial E|_v$ denote the set of elements in $E|_v$ whose type $e$ cuff lies in $\partial E$. Thus
	\[
	\mathcal P_m|_v\supset E|_v\supset \partial E|_v,
	\]
	where $\mathcal P_m|_v$ denotes the set of all type $v$ pairs of pants.  Under the condition $\E_{i-1}=E$, at step i we explore a pair of pants among $\mathcal P_m|_v-(E|_v-\partial E|_v)$. The event 
	\[(\text{step i is bad})\cap( \E_{i-1}=E)\]happens if and only if the pair of pants explored at step i is among $\partial E|_v$. According to the uniform edge model, we have
	\[
	\mathbb P\left(\text{step i is bad}|\, \E_{i-1}=E\right)=\frac{\#\partial E|_v}{m-\# E|_v+\#\partial E|_v}.
	\]
	Since each step explores at most one pair of pants, we have
	\[
	\#\partial E|_v\leq \# E|_v\leq \#\E_{i-1}\leq i.
	\]
	This implies
	\[
	\mathbb P\left(\text{step i is bad}|\, \E_{i-1}=E\right)\leq \frac{i}{m-i}\leq \frac{4}{3}\frac{i}{m},
	\]
	where the last inequality holds provided $i\leq m/4$. Letting $c=4/3$, the probability of getting a bad step i given $\E_{i-1}=E$ is therefore bounded from above by $c\tfrac{i}{m}$. Combining this with the formula of total probability, we complete the proof.
	\end{proof}
	\subsection{Nearest cuff algorithm}
	In this subsection, we discuss the deterministic properties of assembly surfaces obtained by exploring with a nearest cuff algorithm. 
	 
	\begin{definition}
		A nearest cuff algorithm $\mathcal A$ is an  algorithm such that, for any assembly surface $E$ and any $P\in E$, the  boundary cuff $\mathcal A(E,P)$ minimizes the hyperbolic distance to $P$ in $E$ among all boundary cuffs of $E$.
	\end{definition}
	 
	Let $d_E$ denote the hyperbolic metric on the assembly surface $E$.
	 Assume $E^\prime\subset E$. Let $R^+(E^\prime,E)$ denote the Hausdorff distance between $E^\prime$ and $E$ with respect to $d_E$. Hence, if $R^+(E^\prime,E)\leq R$ then $E$ is exhausted by the $R$-neighborhood of $E^\prime$. Let $R^-(E^\prime,E)$ denote the distance from $E^\prime$ to the boundary of $E$.

	\begin{lemma}
	\label{lem: nearest}
		Let $\Delta_a$ be the maximal diameter of $a$-good pairs of pants. For any pants exploration with nearest cuff algorithm
		\[
		E_0\subset E_1\subset\cdots E_i \subset \cdots \subset E_j
		\]
		we have
		\[
		R^-(E_i, E_j)\leq R^+(E_i,E_j)\leq R^-(E_i, E_j)+2\Delta_a.
		\]
	\end{lemma}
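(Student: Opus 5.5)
The first inequality is elementary and I will handle it first. Set $R^- := R^-(E_i,E_j)$, the distance in $d_{E_j}$ from $E_i$ to $\partial E_j$. Take a point $q \in \partial E_j$ realizing this distance (or nearly realizing it, and then take a limit). Since $q \in E_j$, the Hausdorff distance satisfies $R^+(E_i,E_j) \geq d_{E_j}(q,E_i) = R^-$. If $\partial E_j = \emptyset$, then $R^- = \infty$ by convention and both inequalities are trivial. So I will assume from now on that $\partial E_j \neq \emptyset$.

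For the second inequality, I will fix an arbitrary point $x \in E_j$ and show that $d_{E_j}(x,E_i) \leq R^- + 2\Delta_a$.

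If $x \in E_i$, there is nothing to prove. Otherwise, let $Q$ be the pair of pants containing $x$. It was added at some good step $k$ with $i < k \leq j$. At that step the algorithm explored beyond the cuff $\gamma_k = \mathcal A(E_{k-1},P)$, and by the nearest cuff property $\gamma_k$ minimizes the $d_{E_{k-1}}$-distance to the root $P$ among all cuffs of $\partial E_{k-1}$. Since $x$ and $\gamma_k$ both lie in $Q$, we have $d(x,\gamma_k) \leq \Delta_a$. It therefore suffices to show
\[
d_{E_j}(\gamma_k,E_i) \leq R^- + \Delta_a .
\]

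This requires a monotonicity property of the algorithm. Define $r_k := d_{E_{k-1}}(P,\gamma_k)$, the distance of the explored cuff at step $k$. I want $r_k$ to be nondecreasing in $k$, up to an additive error controlled by $\Delta_a$. The reason is that the new boundary cuffs created at step $k$ belong to the newly added pants $Q$, which is attached along $\gamma_k$. Any path from $P$ to these cuffs must cross $\gamma_k$, or else pass through an older boundary cuff, which is at distance at least $r_k$ by minimality. Gluing at a bad step only adds identifications among existing cuffs and does not shorten distances to cuffs that remain in the boundary. The complication is that distances are measured in the growing surfaces $E_{k-1}\subset E_j$. I will handle this by noting that a shortest path in $E_j$ from $P$ to a point of $E_{k-1}$ either stays inside $E_{k-1}$ or first exits through a cuff of $\partial E_{k-1}$, which is at distance at least $r_k$.

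Granting monotonicity, the estimate follows. Let $y \in \partial E_j$ attain $d_{E_j}(y,E_i) = R^-$. The point $y$ lies on a boundary cuff that is still present at time $k-1$, or it arises from a later pants, so the algorithm's choice at step $k$ gives, roughly,
\[
d(P,\gamma_k) \leq d(P,y) + \Delta_a .
\]
Comparing distances measured from $P$ with distances measured from $E_i$ is the delicate point. I will use that $E_i$ contains $P$ and is the sublevel set of the exploration, that is, $E_i$ is roughly the $r_i$-ball around $P$. The plan is to relate each distance from $E_i$ to the corresponding distance from $P$ minus $r_i$, with an error of at most one $\Delta_a$. Combining the two additive $\Delta_a$ errors gives the bound $R^- + 2\Delta_a$.

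The main obstacle is exactly this step: converting the nearest-to-$P$ property into a statement about distance to $E_i$ while tracking the changing metrics $d_{E_k}$, and keeping the total additive loss within $2\Delta_a$.
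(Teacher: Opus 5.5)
There is a genuine gap. Your proof of the first inequality is fine. So is your reduction to showing $d_{E_j}(\gamma_k,E_i)\le R^-+\Delta_a$ for the pants $Q\ni x$ added at a good step $k\in(i,j]$. You have also correctly located the real issue: the algorithm picks cuffs nearest to the root $P$, while $R^{\pm}$ are measured from $E_i$. (The paper's proof is a two-line contradiction: a point farther than $R^-+2\Delta_a$ from $E_i$ would give two adjacent pants beyond distance $R^-$, which the nearest cuff rule is said to forbid. It does not spell this comparison out either.) But your proposal stops exactly at that step, so the second inequality is not proved. The bookkeeping you sketch also does not close. You spend $\Delta_a$ on $d(x,\gamma_k)$, another $\Delta_a$ on the ``rough'' bound $d(P,\gamma_k)\le d(P,y)+\Delta_a$, and allow a further $\Delta_a$ when converting root-distances to $E_i$-distances, which totals $3\Delta_a$. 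Finally, the claim that a bad-step gluing ``does not shorten distances to cuffs that remain in the boundary'' is false. Gluing the nearest cuff $\gamma_k$ to a distant $\bar\gamma_k$ can bring the cuffs adjacent to $\bar\gamma_k$ much closer to $P$. What is true, and all that is needed, is that no remaining boundary point drops below distance $r_k$.

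The missing ingredient is a first-exit estimate. It makes monotonicity exact and makes one side of the conversion lossless. Put $r_l=d_{E_{l-1}}(P,\mathcal A(E_{l-1},P))$ for $l\ge 1$, $r_0=0$, and $r_{j+1}=d_{E_j}(P,\partial E_j)$. Suppose $l\le j$ and $w\in E_j$ is not in $\mathrm{int}\,E_l$. Any path in $E_j$ from $P$ to $w$ meets $\partial E_l$, and its initial segment up to the first such point lies in $E_l$; hence $d_{E_j}(P,w)\ge d_{E_l}(P,\partial E_l)=r_{l+1}$. Since interior cuffs stay interior, $\partial E_j$ misses $\mathrm{int}\,E_l$, so $r_{l+1}\le r_{j+1}$. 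This one estimate handles both the growing metrics and the bad steps.

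Now take $g\in\gamma_k$ with $d_{E_{k-1}}(P,g)=r_k$. Since $\gamma_k\subset\partial E_{k-1}$ misses $\mathrm{int}\,E_i$, cutting any path from $P$ to $g$ at its first point of $\partial E_i$ gives $d_{E_j}(g,E_i)\le d_{E_j}(P,g)-r_{i+1}\le r_k-r_{i+1}$. On the other side, each $z\in E_i$ lies in a pants explored at a step $\le i$, so $d_{E_j}(P,z)\le r_i+\Delta_a$. The triangle inequality then yields $R^-\ge r_{j+1}-r_i-\Delta_a$. Using $r_k\le r_{j+1}$ and $r_i\le r_{i+1}$, you get $d_{E_j}(\gamma_k,E_i)\le R^-+\Delta_a$, which is exactly your reduced claim. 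The two $\Delta_a$'s are the diameter of $Q$ and the outer radius of $E_i$ beyond $r_i$. No comparison of $\gamma_k$ with the point $y$ realizing $R^-$ is needed.
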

	\begin{proof}
		The first inequality follows from the definition of $R^+$ and $R^-$. We prove the second inequality by contradiction. Assume that $R^+>R^-+2\Delta_a$ (here we omit $(E_i,E_j)$ to simplify notation).  This implies that there exist two consecutive pairs of pants $P,P^\prime\in E_j$ such that $d_{E_j}(E_i,P),d_{E_j}(E_i,P^\prime)>R^-$. This can't happen in  exploration with nearest cuff algorithm, a contradiction.
	\end{proof}
	For an edge $e\in E(\G)$ and an assembly surface $E$, let $I_e(E)$ denote the number of interior cuff of $E$ of type $e$. A priori, an assembly surface $E$ could concentrate over only part of $S$, such that $I_e(E)$ is much less than the cuffs in $E$. The next lemma shows that this cannot happen if $E$ is obtained by pants exploration with nearest cuff algorithm: every edge type occurs with positive density among the interior cuffs of $E$.
	\begin{lemma}[Equidistribution Lemma]
	\label{lem: equidistribution}
	There exists a constant $c>0$ depending only on $S$ and $\Gamma$ such that the following holds. Let $e$ denote an edge of $\G$. For any assembly surface $E$  obtained by exploring with nearest cuff algorithm, the number of interior cuffs in $E$ of type $e$ is bounded from below   
	\[ I_e(E)\geq c\#E-\frac 1c.\]
	
	\end{lemma}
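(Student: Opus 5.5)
The plan is to exploit the fact that an exploration with the nearest cuff algorithm grows roughly like a metric ball. Every pair of pants of $E$ that lies sufficiently deep inside $E$ is then surrounded by a full copy of a fixed-size neighborhood in the universal cover. The whole of $\G$ appears inside such a neighborhood, so every edge type occurs among the interior cuffs nearby.

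\textbf{Step 1: a local lemma.} Since $\G$ is finite and connected, let $D$ be its combinatorial diameter. For any pair of pants $Q$ in a cover or assembly surface, the combinatorial ball of radius $D+1$ around $Q$ maps onto $\G$ via the natural map to $S$. If that ball consists of interior pants, then some interior cuff of type $e$ lies within combinatorial distance $D+1$ of $Q$. By Lemma~\ref{tree-hyp}, or simply because each pair of pants has diameter at most $\Delta_a$, combinatorial distance $D+1$ corresponds to hyperbolic distance at most $r_0:=(D+2)\Delta_a$. Hence every pair of pants $Q\in E$ with $d_E(Q,\partial E)> r_0$ has an interior type-$e$ cuff within distance $r_0$.

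\textbf{Step 2: counting.} Let $G\subset E$ be the set of pants at distance $>r_0$ from $\partial E$, and let $N$ be the maximal number of pants within combinatorial distance $D+1$ of a given pair of pants; trivalence gives $N\le 3\cdot 2^{D+1}$. Each interior type-$e$ cuff is adjacent to at most $N$ pants of $G$ that selected it in Step 1. Therefore
\[ I_e(E)\ge \#G/N. \]

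\textbf{Step 3: most of $E$ is deep.} It remains to show $\#G\ge c'\#E - C'$. This is the main obstacle, and it is where the nearest cuff structure enters. By Lemma~\ref{lem: nearest}, $E_i$ is sandwiched between neighborhoods of $E_0$: all of $E$ lies within $R^-+2\Delta_a$ of the root. The pants not in $G$ lie in the shell of width $r_0+2\Delta_a$ near the boundary. The first attempt is to bound the shell by a fixed fraction of $E$. Here I would reuse the tree argument of Lemma~\ref{lem: S-estimate} and Lemma~\ref{lem: bad-good-est}: the pants in the shell are boundary-type vertices, or lie within a bounded number $K$ of tree steps of them. The difficulty is that in a tree the boundary can be comparable to the whole, so a bound of the form $\#(\text{shell})\le (1-c')\#E$ is the realistic target, not $o(\#E)$.

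To get that, I would map each shell pair of pants to an ancestor (toward the root $P$) at tree distance about $K+1$, whenever such an ancestor exists. Lemma~\ref{tree-hyp} and the nearest-cuff property put such an ancestor in $G$. Each ancestor receives at most $3\cdot 2^{K+1}$ shell pants, so $\#G\ge \#(\text{shell})/3\cdot2^{K+1}$ up to an additive constant. That constant comes from pants within $K+1$ of the root, where no such ancestor exists. This gives $\#G\ge c''\#E-C''$.

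Bad steps create cycles and break the tree picture, so I would work in the universal pants tree, lifting the exploration. The additive $1/c$ in the statement absorbs the bounded initial stage of the exploration. Combining Steps 2 and 3 gives $I_e(E)\ge c\#E-1/c$, with $c$ depending only on $(S,\Gamma)$ through $D$, $\Delta_a$ and $a$.
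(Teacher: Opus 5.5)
Your proof is sound in outline, but Step~3 takes a genuinely different and heavier route than the paper.

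Steps~1--2 are a local version of the paper's count. The paper takes the subsurface $H$ of a spanning tree $\mathcal H\ni e$ of $\G$, with $D_H:=\diam(H)$, and lifts it into an auxiliary cover $S_m\supset E$. Distinct type-$v$ pants lie in disjoint lifts $H_P$, each carrying its own interior type-$e$ cuff. Hence $I_e(E)\ge \#E'/\#\G$ whenever $R^-(E',E)>D_H$, with no multiplicity loss.

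For Step~3 the paper uses no tree structure. It takes the first stage $E_i$ of the exploration with $R^-(E_i,E)\le D_H$, so $E_{i-1}$ is entirely deep. It then applies Lemma~\ref{lem: nearest} to the pair $E_i\subset E$ (you only used $E_0\subset E$), which gives $R^+(E_i,E)\le D_H+2\Delta_a$. Bounded geometry then yields $\#E\le N\#E_i\le N(\#E_{i-1}+1)$, which is exactly your $\#G\ge c'\#E-C'$. This needs only the finitely many pants shapes of $S$ and no twist condition, and it is blind to bad steps.

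Your ancestor charging can be made to work, but two points must be done properly.

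First, the depth of the ancestor $A$. A shortest path from $Q$ to $P$ crosses the $K+1$ cuffs between $Q$ and $A$. Lemma~\ref{tree-hyp}, in the form $d_{tree}\le ca(d_{hyp}+1)$ that its proof actually gives, then puts all of $A$ within $R^+-K/(ca)+1+\Delta_a$ of $P$. Every boundary cuff is at distance $\ge R^-\ge R^+-2\Delta_a$ from $P$. So $K$ must be chosen with $K>ca(r_0+3\Delta_a+1)$, not as the tree distance from shell pants to $\partial E$.

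Second, bad steps. After a bad step the exploration itself does not lift, so ``lifting the exploration'' is not the right move. Instead, pass to the regular cover $\hat E\to E$ whose pants graph is the universal cover of that of $E$. Pants lift homeomorphically, so $\hat E$ is a pants tree. Lift a shortest path from each shell pants to a fixed lift $\hat P$, take the ancestor in $\hat E$, and project. Projection does not increase combinatorial distance, so the fiber bound $3\cdot 2^{K+1}$ and the additive constant from pants near the root both survive.

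So your route gives an explicit local charging of shallow pants to nearby deep ones. The cost is the twist hypothesis (through Lemma~\ref{tree-hyp}) and covering-space bookkeeping, both of which the paper's stopping-time argument avoids.
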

	
	\begin{proof}
		Take a spanning subtree $\mathcal H$ of $\G$ containing $e$, then the cuff corresponding to $e$ is interior to the associated subsurface $H$. Let $D$ be the diameter of the surface $H$. Choose an auxiliary surface cover $S_m$ such that $E\subset S_m$. For each pair of pants $P\in S_m$, let $H_P\subset S_m$ be the unique lift of $H$ containing $P$.  Distinct pants of the same type lie in distinct copies of $H$. Every copy of $H$ contains exactly one pair of pants of every type and one interior cuff of type $e$.
		
		\par Assume $E^\prime$ is an assembly subsurface of $E$ such that $R^-(E^\prime,E)> D$. Then for any pair of pants $P\in E^\prime$ the lift $H_P$ is included in $E$. This implies that
		\begin{equation}
		\label{E100}
			I_e(E)\geq\text{the number of lifts included in $E$}\geq\# E^\prime|_{v},
		\end{equation}
		for every vertex $v\in\G$.   
		\par Since $E$ is obtained by exploring, there exists a sequence of assembly surfaces $E_0\subset E_1\subset\cdots\subset E$. Let $E_i$ be the first assembly surface in this sequence such that $ R^-(E_{i},E)\leq D$. According to Lemma \ref{lem: nearest}\begin{equation}
		\begin{aligned}
			R^+(E_i,E)\leq R^-(E_{i},E)+2\Delta_a\leq D+2\Delta_a.
		\end{aligned}
	\end{equation}
	We claim that the above estimate implies
	\begin{equation}\label{E1}
	    \# E_i\geq \frac 1N\# E,
	\end{equation}
	for some $N>0$ depending only on $S$ and $\Gamma$.  In fact, the surface $E$ is covered by the $(D+2\Delta_a)$-neighborhood of $E_i$, hence covered by the union of $(D+2\Delta_a)$-neighborhoods of pairs of pants in $E_i$. The constant $N$ is the uniform upper bound of the number of pairs of pants in these neighborhoods. This proves the claim.
	\par Now we are going to complete the proof. If $i=0$, then $N \geq \#E$. If $i\geq 1$, then $R^-(E_{i-1},E)>D$ according to the definition of $E_i$. By replacing $E^\prime$ with $E_{i-1}$ in (\ref{E100}) and  taking sum over $v\in\G$, we obtain
		\begin{equation}
		\label{E101}
		I_e(E)\geq \frac{1}{k}\#E_{i-1},
		\end{equation}
		where $k=\#\G$. Combining (\ref{E1}) and (\ref{E101}) implies 
		\[I_e(E)\geq\frac 1k\left(\#E_i-1\right)\geq \frac{1}{Nk}\# E-\frac 1k.\] 
		Letting $c=(kN)^{-1}$, this completes the proof.
	\end{proof}
	\subsection{Efficient pants explorations}
	Let $S_m$ be a degree $m$ cover of $S$. From now on, we are mostly interested in the pants exploration of $S_m$ with nearest cuff algorithm
	\[
 	P=E_0\subset E_1\subset E_2\subset\cdots\subset E_{ter},
 	\] 
 	where $\lfloor m^{1/2}\log m\rfloor$ pairs of pants are found in the terminal exploration subsurface $E_{ter}\subset S_m$. If $S_m$ is connected, then the terminal exploration subsurface $E_{ter}$ can always be found before $3\lfloor m^{1/2}\log m\rfloor$ steps. If not, then the terminal exploration subsurface may not exit.
	\begin{definition}
	    Let $\epsilon>0$. A pants exploration of $S_m$
	    \[
	    P=E_0\subset E_1\subset\cdots\subset E_{ter}
	    \]is said to be \emph{$\epsilon$-efficient}, if it 
	    satisfies
	    \begin{enumerate}
	    \item First phase condition: Let $M=\lfloor m^{1/2-\epsilon}\rfloor$, $k_1=\lfloor3/\epsilon\rfloor$. The sequence of explorations 
	    \[
	    E_0\subset E_1\subset\cdots\subset E_{M}
	    \]
	    gets at most $k_1$ bad steps;
	    \item Second phase condition: Let $k_2=\lfloor\log^3m\rfloor$. The sequence of explorations
	    \[
	    \ E_{M}\subset\cdots\subset E_{ter}
	    \]
	    gets at most $k_2$ bad steps.
	\end{enumerate}
	 
	\end{definition}

	As before, we write $\E_{ter}$ to be the terminal exploration subsurface in the random surface $\mathcal{S}_m$ rooted at some $P\in\mathcal P_m$.	The constants $M,k_1,k_2$ in the above definition are carefully chosen such that the following two lemmas hold.
	\begin{lemma}\label{eff}
			Assume the pants decomposition graph $\G$ is simple. Then with probability tending to $1$ as $m\to\infty$, a random $m$-cover $\mathcal S_m$ of $S$ satisfies the following properties:
	\begin{enumerate}
		\item The surface $\mathcal S_m$ is connected;
		\item For every $P\in \mathcal P_m$, the pants exploration with nearest cuff algorithm rooted at $P$ is $\epsilon$-efficient;
	\end{enumerate}
	\end{lemma}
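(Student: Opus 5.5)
We prove the two properties separately and combine them with a union bound. For property (1) we use the classical fact that a uniformly random degree-$m$ cover of a fixed connected graph $\G$ (with $\G$ having a vertex of degree $\ge 3$) is connected with probability $1-O(1/m)$. One way to see this is to fix a spanning tree of $\G$; the permutations on the tree edges merely relabel the fibres. The permutations on the remaining $\ge 1$ edges then generate a subgroup of $S_m$ that is transitive with probability $1-O(1/m)$. This is the Dixon-type estimate: already a single uniform permutation together with any other generator is transitive with probability $1-O(1/m)$, since we can bound the expected number of invariant subsets of size $j\le m/2$ by $\sum_j \binom{m}{j}^{-1}$. Connectivity of $\mathcal S_m$ is equivalent to connectivity of $\G_m$, so (1) follows. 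Moreover, on the connected event the terminal surface $\E_{ter}$ exists. The total number of steps is then at most $3\lfloor m^{1/2}\log m\rfloor\le m/4$, so Proposition \ref{badprob} applies at every step.

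For property (2) we fix a root $P\in\mathcal P_m$ and prove that the failure probability is $o(1/m)$. Summing over the $\#\G\cdot m$ roots then gives $o(1)$. The key input is that Proposition \ref{badprob} bounds the bad-step probability conditionally on the whole past $\E_{i-1}=E$, uniformly in $E$. Hence for any fixed set of step indices $i_1<\dots<i_k$, the probability that all of these steps are bad is at most $\prod_j c\,i_j/m$, by iterated conditioning.

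For the first phase we bound the probability that there are at least $k_1+1$ bad steps among the first $M$ by
\[
\binom{M}{k_1+1}\left(\frac{cM}{m}\right)^{k_1+1}\le \left(\frac{cM^2}{m}\right)^{k_1+1}\le \left(c\,m^{-2\epsilon}\right)^{\lfloor 3/\epsilon\rfloor+1}=o(m^{-5}),
\]
since $(k_1+1)\cdot 2\epsilon>6-2\epsilon$. For the second phase there are $n\le 3m^{1/2}\log m$ steps, each bad with conditional probability at most $p:=c\,n/m\le 3c\,m^{-1/2}\log m$. The number of bad steps is therefore stochastically dominated by $\mathrm{Bin}(n,p)$, whose mean is $O(\log^2 m)$. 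A Chernoff bound, or the same union bound $\binom{n}{k_2}p^{k_2}\le (e\,np/k_2)^{k_2}$, then gives
\[
\mathbb P(\text{more than }k_2\text{ bad steps})\le \left(\frac{C\log^2 m}{\log^3 m}\right)^{\lfloor\log^3 m\rfloor}=e^{-\Omega(\log^3 m\,\log\log m)}=o(m^{-2}).
\]

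There is a subtlety when $\mathcal S_m$ is disconnected, since the exploration may then stop early. To handle it, we formally intersect the event with connectivity, or define the exploration to halt, which only reduces the count of bad steps.

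The main obstacle I expect is justifying the stochastic domination cleanly. The steps are not independent, and the bound of Proposition \ref{badprob} depends on $i$. I would handle this with the product-of-conditional-probabilities argument: sum over subsets of step indices, and for each subset peel off the last bad step using the conditional bound given $\E_{i-1}$. This is valid because the algorithm is deterministic given the past.

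I expect the hypothesis that $\G$ is simple to be used only so that the uniform edge model description in Proposition \ref{badprob} is accurate; with loops, the gluing counts would need adjusting. I would verify this point rather than rely on it heavily.
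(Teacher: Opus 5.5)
Your proposal is correct and is essentially the paper's proof. Like the paper, you first get connectivity of the random cover: the paper cites Amit--Linial, while you sketch the spanning-tree/Dixon argument. You then take a union bound over $k$-tuples of steps using the conditional bound behind Proposition \ref{badprob}, which gives per-root failure probability $o(m^{-1})$, and finish with a union bound over the $O(m)$ roots. Your explicit iterated conditioning, the $k_1+1$ threshold, and the halting convention for disconnected covers make precise points the paper glosses over.

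One small correction. Your connectivity sketch needs at least two non-tree edges, i.e.\ two independent uniform permutations, which is exactly what your count $\sum_j\binom{m}{j}\cdot\binom{m}{j}^{-2}$ uses. A vertex of degree $\ge 3$ or a single remaining edge is not enough: with one remaining edge the cover is connected only with probability $1/m$. The condition holds here because $\G$ is connected and trivalent with first Betti number $g(S)\ge 2$. Your argument also shows that simplicity of $\G$ is not needed for property 1; the paper invokes simplicity only to quote Amit--Linial.
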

	\begin{proof}Condition 1 follows from the next proposition. 
	\begin{proposition}[{\cite[Theorem 1]{1}}]\label{conn}
	Let $\G$ be a connected simple graph with minimal degree $d\geq 3$. Then with probability $1-o(1)$ as $m\to \infty$, a random $m$-cover of $\G$ is connected. \qed
	\end{proposition}
    Next we compute the probability of $\epsilon$-efficiency. By Proposition \ref{badprob},
	$$\mathbb P(\text{step i is bad})\leq c\frac{i}{m}.$$
		A union of all $k$-tuples of steps shows that the probability of making at least $k$ bad steps among the first $N$ steps is bounded above by  $$\binom{N}{k}\times \left(c\frac{N}{m}\right)^k\leq \frac{1}{k!}\left(c\frac{N^2}{m}\right)^k.$$
	   So the probability of getting $k_1$ bad steps in the first $M$ steps is bounded above by $$\frac{1}{k_1!}\left(c\frac{m^{1-2\epsilon}}{m}\right)^{k_1}=O(m^{-2\epsilon k_1})=o(m^{-3}).$$
	   Similarly, the probability of getting $k_2$ bad steps in the second phase condition is bounded above by $$\frac{1}{(\log^3m)!}\left(9c\frac{m\log^2m}{m}\right)^{\log^3m}\leq \left(\frac{c'}{\log m}\right)^{\log^3m}=o(m^{-3}). $$
	    where the first inequality is by Stirling's formula. 
	    
	    From the calculation above we know that for any $P\in \mathcal P_m$, the pants exploration with nearest cuff algorithm rooted at $P$ is $\epsilon$-efficient with probability $1-o(m^{-3})$. Finally, applying a union bound over $\#\mathcal{P}_m=O(m)$ many $P$, one proves Condition 2.
	    \end{proof}
	    
	  The following lemma bounds the diameter of an efficient $\E_{ter}$.
	\begin{lemma}\label{expldist}
	 For any $\epsilon$-efficient pants exploration of $S_m$ with nearest cuff algorithm rooted at $P_0$ $$P_0=E_0\subset \cdots \subset E_{ter}\subset S_m,$$
	 the following inequality holds for all sufficiently large $m$
	 \[R^+\left(P_0,E_{ter}\right)\leq \frac{1}{2}\left(\frac{1}{\delta_a}+10\epsilon\right)\log m.\]
	\end{lemma}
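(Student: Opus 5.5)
The plan is to compare the explored surface with balls in the $a$-good pants tree, where the volume growth bound of Theorem \ref{thm: volume-growth} applies, and to control the effect of bad steps separately. Set $R=R^-(P_0,E_{ter})$. By Lemma \ref{lem: nearest}, $R^+(P_0,E_{ter})\leq R+2\Delta_a$, and $2\Delta_a$ is a constant independent of $m$. It therefore suffices to show
\[
R\leq \tfrac12\left(\delta_a^{-1}+9\epsilon\right)\log m
\]
for large $m$.

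Since $\#E_{ter}=\lfloor m^{1/2}\log m\rfloor$, it is enough to prove a lower bound of the form
\[
\#E_{ter}\gtrsim e^{\delta_a(R-r_0)}
\]
with some loss $r_0$. We will then need to show this loss costs only $O(\epsilon\log m)$ in $R$.

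First consider the case of no bad steps. Then every $E_i$ is a subtree of pants, so it lifts isometrically into the universal pants tree $T$ of $(S,\Gamma)$, which is $a$-good. Because the exploration uses the nearest cuff algorithm, $E_i$ contains every pair of pants of $T$ whose distance from $P_0$ is less than $R^-(P_0,E_i)$. Hence $\#E_i\geq \#\T(R^-(P_0,E_i))\geq c_ae^{\delta_a R^-}$. Taking $i=ter$ gives $R\leq \delta_a^{-1}(\tfrac12\log m+\log\log m)+O(1)$, which is within the target.

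Bad steps glue two boundary cuffs together, which may create short loops, so the ball of $E$ around $P_0$ is no longer a tree ball. I would handle the two phases separately.

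In the first phase there are at most $k_1=\lfloor 3/\epsilon\rfloor$ bad steps among $M=\lfloor m^{1/2-\epsilon}\rfloor$ steps. I would show that $E_M$ still has $R^-(P_0,E_M)\geq$ roughly $\delta_a^{-1}(\tfrac12-\epsilon)\log m$ minus a bounded number $O(k_1)$ of extra cuff lengths. The reason is that each bad glue removes one boundary cuff, and only $k_1$ such removals occur. So for most boundary cuffs of $E_M$, the region beyond is still tree-like, and the distance estimate from the tree case persists up to additive errors of order $k_1(a+1+\Delta_a)$, which is $O(1)$ in $m$.

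In the second phase, from $E_M$ until $E_{ter}$, the explored region has $m^{1/2}\log m$ pants but at most $\log^3 m$ bad steps. Here I would use the Equidistribution Lemma \ref{lem: equidistribution}, or more directly a counting argument, to show that $E_M$ has at least about $cM$ boundary cuffs. The exploration beyond these cuffs grows in a family of disjoint pants trees, and at most $\log^3 m=o(M)$ of them are spoiled by bad steps. Each unspoiled boundary cuff of $E_M$ then spawns a half pants tree. If $\rho$ denotes the additional radius explored past $E_M$, the volume growth (the same supermultiplicativity argument of Proposition \ref{prop: supermulti}, applied from a cuff) gives about $M c_a e^{\delta_a\rho}$ pants. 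This must be at most $m^{1/2}\log m$, which yields $\rho\leq \delta_a^{-1}(\epsilon\log m+\log\log m)+O(1)$.

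Adding the two phases gives
\[
R\leq \delta_a^{-1}\left(\tfrac12\log m+O(\log\log m)\right)+O(1),
\]
and hence the claim with room to spare in the $10\epsilon$.

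The main obstacle is making the second phase rigorous. The nearest cuff algorithm explores globally by distance, not cuff by cuff. I need to argue that when $R^-$ increases by $\rho$, every unspoiled boundary cuff of $E_M$ near distance $R^-(P_0,E_M)$ has had its full half-tree ball of radius about $\rho$ explored. That requires comparing the distance from $P_0$ with the distance from the cuff, up to the additive $2\Delta_a$ of Lemma \ref{lem: nearest}. It also requires a lower bound on the number of boundary cuffs of $E_M$ that actually lie at distance about $R^-$. The first attempt would use Lemma \ref{lem: nearest}: every boundary cuff of $E_M$ lies within $R^-(P_0,E_M)+2\Delta_a$ of $P_0$, so all of them are comparable. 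The count of boundary cuffs comes from the Euler characteristic: with few bad steps, $E_M$ is nearly a tree, so it has about $M$ boundary cuffs.
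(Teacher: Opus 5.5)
Your plan is essentially the paper's proof. You split the exploration at $E_M$. For the first phase you find a descendant subtree within $O(k_1)$ pants of $P_0$ that is untouched by bad steps, and apply Theorem~\ref{thm: volume-growth} to it; the paper does this by doubling the subtree along its incoming cuff. For the second phase you use the at least $M-3k_1-2k_2$ boundary cuffs of $E_M$ whose descendant half pants trees avoid bad steps. Measuring both phases from $P_0$ via Lemma~\ref{lem: nearest}, instead of the paper's $R^+(E_M,E_{ter})$ plus a triangle inequality, and using $\#E_M\le M+1$ in place of $m^{1/2}$, actually makes the $\epsilon$-terms cancel.

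The one slip is in the first phase. What you need, and what your final summation actually uses, is the upper bound $R^-(P_0,E_M)\le \delta_a^{-1}\log(M+1)+C(k_1,a)$. The lower bound you wrote is not needed, and it fails in general because the actual growth rate exceeds $\delta_a$.
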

	
	\begin{proof}
	Let $R_1=R^+(E_0,E_M)$, and $R_2=R^+(E_M,E_{ter})$.  We are going to estimate $R_1$ and $R_2$ separately. 
	\par According to the first phase condition, there must be at least one pair of pants $P\in E_M$ at graph distance at most $k_1$ from $P_0$, none of whose descendants in $E_M$ are explored in a bad step. The descendants of $P$ in $E_M$ forms a connected component $X$ of $E_M-\gamma_P$, where $\gamma_P$ is the incoming cuff of $P$. By doubling $X$ along the incoming cuff $\gamma_P$, we get a surface $\Tilde{X}$ which is an $a$-good finite pants tree. According to Lemma \ref{lem: nearest}, the finite pants tree $\Tilde X$ contains a $(R_1-C_1)$-ball for some $C_1>0$ depending only on $k_1$ and $a$. Theorem \ref{thm: volume-growth} then yields
	\[
	\#E_M\geq \#X= \frac{1}{2}\#\Tilde{X}\geq  e^{\delta_a(R_1-C_1)},
	\] 
	provided $m$ sufficiently large. Since $\#E_M\leq m^{1/2}$, the above estimate implies that $\tfrac{1}{2\delta_a}\log m+C_1\geq R_1$.
	\par We next estimate $R_2$ using the same argument. There are at least $M-3k_1$ boundary cuffs in $\partial E_M$. According to the second phase condition, there are at least $M-3k_1-2k_2$ boundary cuffs $\gamma$ among them, none of whose descendants in $E_{ter}$ are explored in a bad step. For any such $\gamma$ we denote by $X_\gamma$ the descendant connected component. By using the same doubling trick (and Theorem \ref{thm: volume-growth}) for each $X_\gamma$, we obtain the following estimate
	 \[
	 \#E_{ter}\geq \sum_{\gamma}\#X_\gamma\geq (M-3k_1-2k_2)e^{\delta_a(R_2-C_2)}\geq m^{1/2-3\epsilon}e^{\delta_a(R_2-C_2)},
	 \]
	for every sufficiently large $m>0$ and some $C_2>0$ depending only on $a$. Since $\#E_{ter}\leq m^{1/2}\log m$, we obtain $
	4\epsilon\log m+C_2\geq R_2.$ Since $R^+(E_0,E_{ter})\leq R_1+R_2$, combining the estimates on $R_1,R_2$ yields \[
	R^+(E_0,E_{ter})\leq \frac 12\left(\frac{1}{\delta_a}+8\epsilon\right)\log m+C_1+C_2.
	\]
	This completes the proof, because $m$ is sufficiently large.
	\end{proof}

	\subsection{Proof of Theorem \ref{main 2}}
	We are in a position to prove Theorem \ref{main 2}. Let $S_m$ be a surface cover. For any two points $p,p^\prime\in S_m$ we have
	\[
	d_{hyp}(p,p^\prime)\leq d_{hyp}(P,P^\prime)+2\Delta_a,
	\]
	where $d_{hyp}$ is the hyperbolic metric on $S_m$, and $P$ and $P^\prime$ are pairs of pants that $p$ and $p^\prime$ lie in, respectively. Assume $S_m$ satisfies the following properties:
	\begin{enumerate}
	    \item the surface $S_m$ is connected;
	    \item for any $P\in\mathcal P_m$ the terminal exploration of $S_m$ rooted at $P$ is $\epsilon$-efficient;
	    \item for any $P,P^\prime\in\mathcal P_m$ the terminal exploration $E_{ter}$ rooted at $P$ and $E^\prime_{ter}$ rooted at $P^\prime$ have nonempty intersection.
	\end{enumerate}
	Then according to Lemma \ref{expldist}, we have
	\[
	d_{hyp}(P,P^\prime)\leq R^+(P,E_{ter})+R^+(P^\prime,E_{ter}^\prime)\leq \left(\frac{1}{\delta_a}+10\epsilon\right)\log m,
	\]
	where the first inequality comes from the property that $E_{ter}$ and $E^\prime_{ter}$ have nonempty intersection. This implies that
	\[
	\diam(S_m)\leq \sup_{p,p^\prime}d_{hyp}(p,p^\prime)\leq \left(\frac{1}{\delta_a}+11\epsilon\right)\log m,
	\]
	provided $m$ sufficiently large. We \emph{claim} that Properties 1-3 hold, and hence the diameter estimate holds, for the random surface $\mathcal S_m$ with probability tending to $1$ as $m\to\infty$. This completes the proof.

	\begin{proof}[Proof of the claim]
	 We have shown in Lemma \ref{eff} that Properties 1 and 2 hold with high probability. It suffices to prove that the probability that Property 3 fails is negligible. Given $P,P^\prime\in\mathcal P_m$, let $\E$ denote the exploration rooted at $P$ and  $\E^\prime$ denote the exploration rooted at $P^\prime$. We are going to prove that
	 \[\mathbb P(\E_{ter}\cap\E^\prime_{ter}=\emptyset)=o(m^{-3}).\]
	 Therefore, the probability that Property 3 fails is $o(m^{-1})$.
	 \par Under the condition $\E_{i-1}\cap\E^\prime_{ter}=\emptyset$, the event $\E_i\cap\E^\prime_{ter}=\emptyset$ happens only if the explored pair of pants avoids $\partial\E^\prime_{ter}$. Assume we are exploring through a cuff of type $e$. Then the probability of avoiding $\partial\E^\prime_{ter}$ is bounded above by
	 \[
	 1-\frac{\#\partial\E^\prime_{ter}|_e}{m}.
	 \]
	 Since $\#\p\E_{ter}^\prime=(1-o(1))m^{1/2}\log m$, a pigeonhole argument shows that there exists an edge type $e$ such that
	 \[\#\p\E_{ter}^\prime|_e\geq c_1m^{1/2}\log m,\]
	 for some constant $c_1>0$. Therefore, at each step of $\E$ exploring through a cuff of type $e$, the probability of avoiding $\partial \E_{ter}^\prime$ is bounded above by 
	 \[ 1-\frac{\#\p\E_{ter}'|_e}{m}\leq 1-\frac{c_1m^{1/2}\log m}{m}.
	 \]
	 \par On the other hand, by Lemma \ref{lem: equidistribution},  the number of interior cuffs in $\E_{ter}$ of type $e$ is bounded from below by $c_2 m^{1/2}\log m,$ for some $c_2>0$. There must be at least $c_2 m^{1/2}\log m$ steps through a type $e$ cuff before reaching $\E_{ter}$. Hence, the probability that $\E_{ter}$ avoids $\E_{ter}^\prime$ is bounded above by: 
	 \[\left(1-\frac{c_1m^{1/2}\log m}{m}\right)^{c_2m^{1/2}\log m}\leq e^{-c_1c_2\log^2m}=o(m^{-3}).\] 
	\end{proof}


\begin{thebibliography}{}
	\bibitem{1}A. Amit and N. Linial, Random graph covers. I. General theory and graph connectivity, Combinatorica {\bf 22} (2002), no.~1, 1--18; MR1883559
	
	\bibitem{2}T. Budzinski, N. Curien and B. Petri, On the minimal diameter of closed hyperbolic surfaces, Duke Math. J. {\bf 170} (2021), no.~2, 365--377; MR4202496
	
    \bibitem{Peeling-BCP}
Thomas Budzinski, Nicolas Curien, and Bram Petri,
``Universality for random surfaces in unconstrained genus,''
\emph{Electronic Journal of Combinatorics},
vol.~33, no.~2, pp.~P2.15, 2026.

\bibitem{Peeling-Budd}
Timothy Budd,
``The peeling process of infinite boltzmann planar maps,''
\emph{Electronic Journal of Combinatorics},
vol.~23, no.~1, pp.~P1.28, 2016.

	\bibitem{3}P. Buser, {\it Geometry and spectra of compact Riemann surfaces}, reprint of the 1992 edition, 
Modern Birkh\"auser Classics, Birkh\"auser Boston, Boston, MA, 2010; MR2742784
    \bibitem{KM-surface group}J.~A. Kahn and V. Markovi\'c, Immersing almost geodesic surfaces in a closed hyperbolic three manifold, Ann. of Math. (2) (2012), no.~3, 1127--1190; MR2912704
    \bibitem{4}J.~A. Kahn and V. Markovi\'c, The good pants homology and the Ehrenpreis conjecture, Ann. of Math. (2) {\bf 182} (2015), no.~1, 1--72; MR3374956
    
    
    \bibitem{M}J. Mathien, Diameter of a new model of random hyperbolic surfaces, Ann. Inst. H. Poincar\'e
Sect. B (N.S.), to appear, 2026+
    
   
	
	\end{thebibliography}
\end{document}